\newtheorem{theorem}{Theorem}[section]
\newtheorem{lemma}[theorem]{Lemma}
\newtheorem{proposition}[theorem]{Proposition}
\newtheorem{corollary}[theorem]{Corollary}
\newtheorem{definition}[theorem]{Definition}
\newtheorem{remark}[theorem]{Remark}
\newcommand{\E}{{\mathbb{E}} \hspace{1mm}}
\newcommand{\R}{{\mathbb{R}}}
\newcommand{\N}{{\mathbb{N}}}
\newcommand{\C}{{\mathbb{C}}}
\begin{document}

% paper title
\title{Sparse Legendre expansions via $\ell_1$-minimization}
%\title{Sparse Recovery of Legendre expansions}

\author{Holger Rauhut\thanks{H.\ Rauhut is with the Hausdorff Center for Mathematics and
the Institute for Numerical Simulation, University of Bonn, Endenicher Allee 60, 53115 Bonn, Germany, \rm{rauhut@hcm.uni-bonn.de}}, Rachel Ward \thanks{R.\ Ward is with the Courant
Institute of Mathematical Sciences, New York University, 251 Mercer Street, New York, NY, 10012, \rm{rward@cims.nyu.edu}}}
%\date{March 1, 2010; revised July 13, 2010}
\date{March 1, 2010; revised \today}

% make the title area
\maketitle

\begin{abstract}
We consider the problem of recovering polynomials that are sparse with respect to the 
basis of Legendre polynomials from a small number of random samples.
In particular, we show that a Legendre $s$-sparse polynomial of maximal degree $N$ can be recovered from $m \asymp s \log^4(N)$ random samples that are chosen independently according
to the Chebyshev probability measure $d \nu(x) = \pi^{-1}(1-x^2)^{-1/2} dx$.  As an efficient
recovery method, $\ell_1$-minimization can be used. We establish these results by verifying
the restricted isometry property of a preconditioned random Legendre matrix.  We then extend these results to a large
class of orthogonal polynomial systems, including the Jacobi polynomials, of which the Legendre polynomials are a special case.
Finally, we transpose these results into the setting of approximate recovery for functions in
certain infinite-dimensional function spaces.

%, {\color{red} as well as the spherical harmonic system. }
\end{abstract}

%\keywords
\noindent
{{\bf Key words:} Legendre polynomials, sparse recovery, compressive sensing,
$\ell_1$-minimization, condition numbers, random matrices, orthogonal polynomials.}

\medskip

\noindent
{\bf AMS Subject classification:} 
41A10, %Approximations and expansions --> Approximation by polynomials
42C05, %Orthogonal functions and polynomials, general theory
94A20, % Sampling Theory
42A61, %probabilistic methods in Fourier analysis
60B20, % random matrices
15A12, %conditioning
65F35, % Numerical Linear Algebra --> Matrix norms, conditioning, scaling%15B52 % random matrices
94A12. %Signal theory (reconstruction etc.)

\section{Introduction}

Compressive sensing has triggered significant research activity in recent years. Its central motif is that sparse signals can be recovered from what was previously believed to be highly incomplete
information \cite{carota06,do06-2}.  In particular, it is now known \cite{carota06,cata06,ruve08,ra07,ra08,ra09-1} that an $s$-sparse trigonometric polynomial of maximal degree $N$ can be recovered from $m \asymp s \log^4(N)$ sampling points.  These $m$ samples  can be chosen as a random subset from  the discrete set $\{j/N\}_{j=0}^{N-1}$ \cite{carota06,cata06,ruve08}, or independently from the uniform measure on $[0,1]$, see \cite{ra07,ra08,ra09-1}.  

Until now, all sparse recovery results of this type required that the underlying basis be uniformly bounded like the trigonometric system, so as to be \emph{incoherent} with point samples \cite{caro07}.   As the main contribution of this paper, we show that this condition may be relaxed, obtaining comparable sparse recovery results for any basis that is bounded by a square-integrable envelope function.  As a special case, we focus on the Legendre system over the domain $[-1,1]$.    To account for the blow-up of the Legendre system near the endpoints of its domain, the random sampling points are drawn according to the Chebyshev probability measure.  This aligns with classical results on Lagrange interpolation which support the intuition that Chebyshev points are much better suited for the recovery of polynomials than uniform points are \cite{br97-1}.

In order to deduce our main results we establish the \emph{restricted isometry property} (RIP) for a preconditioned
version of the matrix whose entries are the Legendre polynomials evaluated at sample points chosen from the Chebyshev measure.  The concept of preconditioning seems to be new in the context
of compressive sensing, although it has appeared within the larger scope of sparse approximation in a different context in \cite{sv08}. It is likely that the idea of preconditioning can be exploited in other situations of interest as well.

Sparse expansions of multivariate polynomials in terms of tensor products
of Legendre polynomials recently appeared in the problem of numerically 
solving stochastic or parametric PDEs \cite{codesc10,alho10}. Our results indeed extend easily to tensor
products of Legendre polynomials, and the application of our techniques in this context 
of numerical solution of SPDEs seems very promising. 
%{\color{red} We are able to  use the aforementioned results to derive sparse recovery results for functions that have a compressible representation in spherical harmonic basis, when the sparsity level is sufficiently small compared to the maximal degree of the spherical harmonic basis.  When we restrict to functions on the sphere that are invariant under rotations of the sphere around a fixed axes, these sparse recovery guarantees are optimal.}
Our results may also be transposed into the setting of function approximation.  In particular,  we show that the aforementioned sampling and reconstruction procedure is guaranteed to produce near-optimal approximations to functions
in infinite-dimensional spaces of functions having $\ell_p$-summable Fourier-Legendre
coefficients ($0<p < 1$), provided that
%certain infinite-dimensican spaces of functions having compressible Fourier-Legendre
%expansions, if 
the maximal polynomial degree in the $\ell_1$-reconstruction procedure is fixed appropriately in terms of the sparsity level.

Our original motivation for this work was the recovery of sparse spherical 
harmonic expansions \cite{anasro99} from randomly located samples on the sphere. While our preliminary results in this context
seem to be only suboptimal \cite{rw-prep}, 
the results in the present paper apply at least to the recovery of functions on the
sphere that are invariant under rotations of the sphere around a fixed axis. 
Sparse spherical harmonic expansions were recently exploited with good numerical success
in the spherical inpainting problem for the cosmic microwave background \cite{abdefamongst08}, but 
so far this problem had lacked a theoretical understanding. 

We note that the Legendre polynomial transform has fast 
algorithms for matrix vector multiplication; see for instance \cite{dadeja97,herokose96,da03-1,postta98-1,ty08}.
This fact is of crucial importance in numerical algorithms used for reconstructing the original function from its sample values -- especially when the dimension
of the problem gets large.

Our results extend to any polynomial system which is orthogonal with respect to a finitely-supported weight function satisfying a mild continuity condition; this includes the Jacobi polynomials, of which the Legendre polynomials are a special case.   It turns out that the Chebyshev measure is universal for this rich class of orthogonal polynomials, in the
sense that our corresponding result requires the random sampling points to be drawn according
to the Chebyshev measure, independent of the particular weight function. 

\medskip

%*CHANGE* 
Our paper is structured as follows: Section 2 contains the main results for recovery of
Legendre-sparse polynomials. Section 3 illustrates these results with numerical experiments.
In Section 4 we recall known theorems on $\ell_1$-minimization and in Section 5, we prove the results presented in Section 2. Section 6 extends the results to a rich class of orthogonal polynomial systems, including the Jacobi polynomials, while Section 7 contains our main result on the recovery of continuous
functions that are well approximated by Legendre-sparse polynomials.
 %{\color{red} In all cases, the sampling points should be chosen according to the Chebyshev measure; as such, our sparse recovery results may be extended to orthogonal spherical harmonic systems on the sphere.  In this setting, the latitudinal and longitudinal angle of the sampling points are chosen from the uniform distribution, corresponding to higher sampling density on the spherical caps.} 

\paragraph{\bf Notation.}

Let us briefly introduce some helpful notation.  The $\ell_p$-norm on $\R^N$ is defined as 
$
\|z\|_p = \Big( \sum_{j=1}^N |z_j|^p \Big)^{1/p}, 1 \leq p < \infty,
$
and $\|z\|_\infty = \max_{j=1,\hdots,N} |z_j|$ as usual. 
The ``$\ell_0$-norm'', $\|z\|_0 = \#\{j: z_j \neq 0\}$,
counts the number of non-zero entries of $z$. A vector $z$ is called
$s$-sparse if $\|z\|_0 \leq s$, and the error of best $s$-term approximation
of a vector $z\in \R^N$ in $\ell_p$ is defined as
\[
\sigma_s(z)_p = \inf_{y : \|y\|_0 \leq s} \|y-z\|_p.
\]
Clearly, $\sigma_s(z)_p = 0$ if $z$ is $s$-sparse. Informally, $z$ is called compressible
if $\sigma_s(z)_1$ decays quickly as $s$ increases. A result due to Stechkin, see e.g.\ \cite[Lemma 3.1]{fora10-1}, states that, for 
$q<p$, 
\begin{equation}\label{Stechkin}
\sigma_s(x)_p \leq s^{1/p-1/q} \|x\|_q;
\end{equation}
thus, vectors $x \in B_q^N = \{x \in \R^N, \|x\|_q \leq 1\}$ for $0<q\leq 1$ 
can be considered a good model for
compressible signals.

For $N \in \N$, we use the notation $[N] = \{1,\hdots,N\}$.  In this article, $C>0$ will always denote a universal constant that
might be different in each occurence. 

%{\color{red}
The Chebyshev probability measure (also referred to as arcsine distribution) on $[-1,1]$ is given by  $d\nu(x) = \pi^{-1} (1 - x^2)^{-1/2}dx$.  %{\color{red} 
If a random variable $X$ is uniformly distributed on $[0,\pi]$, then the random variable $Y = \cos{X}$ is distributed according to the Chebyshev measure.
%}.
%}  

\section{Recovery of Legendre-sparse polynomials from a few samples}

Consider the problem of recovering a polynomial $g$ 
from $m$ sample values $g(x_1),\hdots,g(x_m)$.  If the number of sampling points is less than or 
equal to the degree of $g$, such reconstruction is impossible in general due to 
dimension reasons. Therefore, as usual in the compressive sensing literature, 
we make a sparsity assumption. 
In order to introduce a suitable
notion of sparsity we consider the basis of Legendre polynomials $L_n$ on $[-1,1]$, normalized so as to be orthonormal with respect to the uniform measure on $[-1,1]$, i.e. $\frac{1}{2} \int_{-1}^{1} L_n(x) L_{\ell}(x) dx = \delta_{n,\ell}$.  
%This differs from the usual normalization for the Legendre polynomials, given by $P_n = (2n+1)^{-1/2} L_n$. 

%Later we choose these points uniformly
%at random. If the number of sampling points is less or equal the degree $N$ of $g$, then in general
%such reconstruction is impossible due to dimension reasons. Therefore, as usual in the compressive sensing literature, we there assume sparsity.

%We consider polynomials on $[-1,1]$ which we can expand in terms of Legendre polynomials.
%We recall the definition of the latter. % below.

%Our aim is to recover polynomials $g$ from $m$ samples values $g(x_1),\hdots,g(x_m)$
%with sampling points $x_\ell \in [-1,1]$, $\ell = 1,\hdots,m$. Later we choose these points uniformly
%at random. If the number of sampling points is less or equal the degree $N$ of $g$, then in general
%such reconstruction is impossible due to dimension reasons. Therefore, as usual in the cosmpressive sensing literature, we there assume sparsity.

An arbitrary real-valued 
polynomial $g$ of degree $N-1$ can be expanded in terms of Legendre polynomials
\begin{equation}\label{gLegendre}
g(x) = \sum_{n=0}^{N-1} c_n L_n(x), \quad x \in [-1,1].
\end{equation}
If the coefficient vector $c \in \R^{N}$ is $s$-sparse, we call the corresponding polynomial
\emph{Legendre $s$-sparse}, or simply Legendre-sparse. If $\sigma_s(c)_1$ decays 
quickly as $s$ increases, then $g$ is called Legendre--compressible.

We aim to reconstruct Legendre--sparse polynomials, and more generally Legendre--compressible polynomials, of maximum degree $N-1$ from $m$ samples $g(x_1),\hdots,g(x_m)$, where $m$ 
is desired to be small -- at least smaller than $N$. Writing $g$ in the form \eqref{gLegendre}
this task clearly amounts to reconstructing the coefficient vector $c \in \R^{N}$. 

To the set of $m$ sample points $(x_1, \hspace{1mm} \hdots \hspace{1mm}, x_m)$ we associate the $m \times N$ \emph{Legendre} matrix $\Phi$ defined component-wise by 
\begin{equation}
\label{legendre_matrix}
%\varphi_{jk} = %\frac{1}{\sqrt{m}}
\Phi_{j,k} = 
L_{k-1}(x_j), \quad j \in [m],\hspace{2mm}  k \in [N].
\end{equation}
Note that the samples $y_j = g(x_j)$ may be expressed concisely in terms of the coefficient 
vector $c \in \mathbb{R}^{N}$ according to
$$
y = %\sqrt{m}
\Phi c.
$$
Reconstructing $c$ from the vector $y$ amounts to solving this system of linear
equations. As we are interested in the underdetermined case $m < N$, this system
typically has infinitely many solutions, and our task is to single out the original sparse $c$. The obvious but naive approach for doing this is by solving for the sparsest solution that agrees with the measurements, 
\begin{equation}\label{l0:prog}
\min_{ z \in \R^{N}}  \|z\|_0 \quad \mbox{subject to}\quad \Phi z = y.
\end{equation}
Unfortunately, this problem is NP-hard in general \cite{avdama97,aw10}. To overcome this computational
bottleneck the compressive sensing
literature has suggested various tractable alternatives \cite{gitr07,carota06,netr08}, 
most notably $\ell_1$-minimization (basis pursuit) \cite{chdosa99,carota06,do06-2}, 
on which we focus in this paper. Nevertheless, it follows from our findings that 
greedy algorithms such as CoSaMP \cite{netr08} or Iterative Hard Thresholding \cite{blda09} may also be
used for reconstruction.

\medskip

Our main result is that any Legendre $s$-sparse polynomial may be recovered efficiently from a number of samples $m \asymp s \log^3(s)\log(N)$.  Note that at least up to the logarithmic factors, this rate is optimal. Also the 
condition on $m$ is implied by the simpler one $m \asymp s \log^4{N}$
Reconstruction is also robust: \emph{any} polynomial may be recovered efficiently to within a factor of its best approximation by a Legendre $s$-sparse polynomial, and, if the measurements are corrupted by noise, $g(x_1) + \eta_1, \hdots, g(x_m) + \eta_m$, to within an additional factor of the noise level $\varepsilon = \| \eta \|_{\infty}$.  We have

\begin{theorem}
\label{uniform:noise}
Let $N,m,s \in \N$ be given such that
%suppose that $m$ sampling points $(x_1, ..., x_m)$ are drawn from the probability measure $\nu$ from Theorem \ref{uniform}, and that the number of measurements again satisfies
$$
m \geq C s \log^3(s) \log(N).
$$
Suppose that $m$ sampling points $(x_1, \hdots, x_m)$ are drawn independently at random from the Chebyshev  
measure, and consider the $m \times N$ Legendre matrix $\Phi$ with entries $\Phi_{j,k} = L_{k-1}(x_j)$, and the $m \times m$ diagonal matrix ${\cal A}$ with entries $a_{j,j} = (\pi/2)^{1/2}(1-x_j^2)^{1/4}$.   Then with probability exceeding
$1-N^{-\gamma \log^3(s)}$ the following 
holds for all polynomials $g(x) = \sum_{k=0}^{N-1} c_k L_k(x)$.
Suppose that noisy sample values $y = \big( g(x_1) + \eta_1, \hdots, g(x_m) + \eta_m \big) = \Phi c + \eta$ are observed, and $\|{\cal A}\eta\|_{\infty} \leq \varepsilon$.   Then the coefficient vector $c = (c_0, c_1, \hdots, c_{N-1})$ is recoverable to within a factor of its best $s$-term approximation error  %(in the metric of $\ell_1^N$), 
and to a factor of the noise level by solving the inequality-constrained $\ell_1$-minimization problem
\begin{align}\label{relaxed}
c^{\#} = \arg \min_{z \in \R^N} \| z \|_1 \quad \mbox{ subject to } \quad \| {\cal A} \Phi z - {\cal A} y \|_2 \leq \sqrt{m}\varepsilon.
\end{align}
Precisely, 
\begin{equation}\label{l1:approx2}
\| c -c^{\#} \|_{2} \leq \frac{C_1 \sigma_s(c)_1}{\sqrt{s}} + C_2\varepsilon,
 %\| c - c_s \|_{\ell_1^N}
\end{equation}

and
\begin{equation}\label{l1:approx}
\|c-c^{\#}\|_{1} \leq D_1 \sigma_s(c)_1 + D_2 \sqrt{s} \varepsilon.
\end{equation}
The constants $C, C_1,C_2,D_1,D_2$, and $\gamma$ are universal.
\end{theorem}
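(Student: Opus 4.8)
The plan is to reduce the statement to a restricted isometry property (RIP) estimate for the \emph{preconditioned} Legendre matrix $\mathcal{A}\Phi$, and then to invoke the standard RIP-based recovery guarantee for inequality-constrained $\ell_1$-minimization recalled in Section 4.

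First I would precondition. Define the functions $Q_n(x) = (\pi/2)^{1/2}(1-x^2)^{1/4}L_n(x)$, so that the entries of $\mathcal{A}\Phi$ are exactly $(\mathcal{A}\Phi)_{j,k} = Q_{k-1}(x_j)$. Two facts make this change of system decisive. On the one hand, a short computation shows that the $Q_n$ are orthonormal with respect to the Chebyshev measure $\nu$: indeed $\int_{-1}^1 Q_n(x)Q_\ell(x)\,d\nu(x) = \frac12\int_{-1}^1 L_n(x)L_\ell(x)\,dx = \delta_{n,\ell}$, the factor $(1-x^2)^{1/2}$ from the preconditioner exactly cancelling the Chebyshev density. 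On the other hand, the classical pointwise bound $\sup_{x\in[-1,1]}(1-x^2)^{1/4}|L_n(x)| \leq K'$, uniform in $n$, guarantees that the $Q_n$ are uniformly bounded, $\|Q_n\|_\infty \leq K$ for a universal constant $K$; this is precisely what tames the blow-up of $L_n$ near the endpoints $\pm 1$. Thus $\mathcal{A}\Phi$ is exactly the sampling matrix of a \emph{bounded orthonormal system} with nodes drawn i.i.d.\ from its own orthogonality measure $\nu$.

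This places us in the setting for which RIP estimates of random sampling matrices are known. Applying the RIP theorem for bounded orthonormal systems with the uniform bound $K$ and the sample count $m \geq C s \log^3(s)\log(N)$, I would conclude that, with probability at least $1 - N^{-\gamma\log^3(s)}$, the normalized matrix $\frac{1}{\sqrt m}\mathcal{A}\Phi$ has restricted isometry constant $\delta_{2s}$ below the threshold required by the recovery theorem of Section 4. It then remains to handle the noise constraint: writing $y = \Phi c + \eta$, the feasibility condition $\|\mathcal{A}\Phi z - \mathcal{A}y\|_2 \leq \sqrt m\,\varepsilon$ is just $\|\frac{1}{\sqrt m}\mathcal{A}\Phi z - \frac{1}{\sqrt m}\mathcal{A}y\|_2 \leq \varepsilon$ for the normalized matrix, and the hypothesis $\|\mathcal{A}\eta\|_\infty \leq \varepsilon$ gives $\|\frac{1}{\sqrt m}\mathcal{A}\eta\|_2 \leq \|\mathcal{A}\eta\|_\infty \leq \varepsilon$ (using $\|v\|_2 \leq \sqrt m\,\|v\|_\infty$ for $v\in\R^m$), so the true vector $c$ is feasible. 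Feeding the RIP constant and this noise level into the recovery theorem yields exactly the bounds \eqref{l1:approx2} and \eqref{l1:approx}.

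The main obstacle is the RIP step. Everything hinges on the preconditioning simultaneously rendering the system uniformly bounded and keeping it orthonormal with respect to $\nu$; the uniform envelope bound on $(1-x^2)^{1/4}L_n(x)$ is the classical ingredient that makes the boundedness constant $K$, and hence the sample complexity, independent of the degree $N$. The probabilistic RIP estimate for bounded orthonormal systems is itself the deep technical input, and the characteristic probability $1 - N^{-\gamma\log^3(s)}$ and the $s\log^3(s)\log(N)$ scaling are inherited directly from it.
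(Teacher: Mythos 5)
Your proposal is correct and follows essentially the same route as the paper: precondition with $\mathcal{A}$ to obtain the system $Q_n = (\pi/2)^{1/2}(1-x^2)^{1/4}L_n$, verify via Szeg\H{o}'s envelope bound and the orthonormality computation that $\{Q_n\}$ is a bounded orthonormal system with respect to the Chebyshev measure (the paper gets $K=\sqrt{3}$), then invoke the RIP theorem for bounded orthonormal systems followed by the RIP-based $\ell_1$-recovery theorem, handling the noise exactly as you do via $\tfrac{1}{\sqrt m}\|\mathcal{A}\eta\|_2 \le \|\mathcal{A}\eta\|_\infty \le \varepsilon$.
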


\begin{remark}\label{rem22}
\begin{itemize}
\item[(a)] \emph{In the noiseless ($\varepsilon = 0$) and exactly $s$-sparse case ($\sigma_s(x)_1 = 0$), the above theorem
implies exact recovery via} 
\[
c^{\#} = \arg \min_{z \in \R^N} \| z \|_1 \quad \mbox{ \emph{subject to} } \quad \Phi z = y.
\]
\item[(b)] \emph{The condition $\|{\cal A}\eta\|_{\infty} \leq \varepsilon$ is satisfied in particular if $\|\eta\|_\infty \leq \varepsilon$.}

\item[(c)]
\emph{The proposed recovery method \eqref{relaxed} is \emph{noise-aware}, in that it requires knowledge of the noise level $\varepsilon$ a priori.
One may remove this drawback by using other reconstruction algorithms such as 
CoSaMP \cite{netr08} or Iterative Hard Thresholding \cite{blda09} which also achieve the reconstruction rates \eqref{l1:approx2} and \eqref{l1:approx} under the stated hypotheses, but do not require knowledge of $\varepsilon$ \cite{blda09,netr08}.  Actually, those algorithms always return $2s$-sparse vectors as approximations, in which case the $\ell_1$-stability result \eqref{l1:approx} follows immediately from \eqref{l1:approx2}, see \cite[p.\ 87]{b09} for details.   }

\end{itemize}
\end{remark}

\section{Numerical Experiments}

Let us illustrate the results of Theorem \ref{uniform:noise}.  In Figure $1(a)$ we plot a polynomial $g$ that is $5$-sparse in Legendre basis and with maximal degree $N = 80$ along with $m = 20$ sampling points drawn independently from the Chebyshev measure.  This polynomial is reconstructed exactly from the illustrated sampling points as the solution to the $\ell_1$-minimization problem \eqref{relaxed} with $\varepsilon = 0$.   In Figure $1(b)$ we plot the same 
Legendre-sparse polynomial in solid line, but the $20$ samples 
have now been corrupted by zero-mean Gaussian noise $y_j = g(x_j) + \eta_j$.  Specifically, we take $\E (|\eta_j|^2 )  = 0.025$, so that the expected noise level $\varepsilon \approx 0.16$.    In the same figure, we superimpose 
in dashed line the polynomial obtained from these noisy measurements as the solution 
of the inequality-constrained $\ell_1$-minimization problem \eqref{relaxed} with noise level 
$\varepsilon  = 0.16$.  

\begin{figure}[h]
\label{fig:illustrate}
\subfigure{
\includegraphics[width=0.45\textwidth]{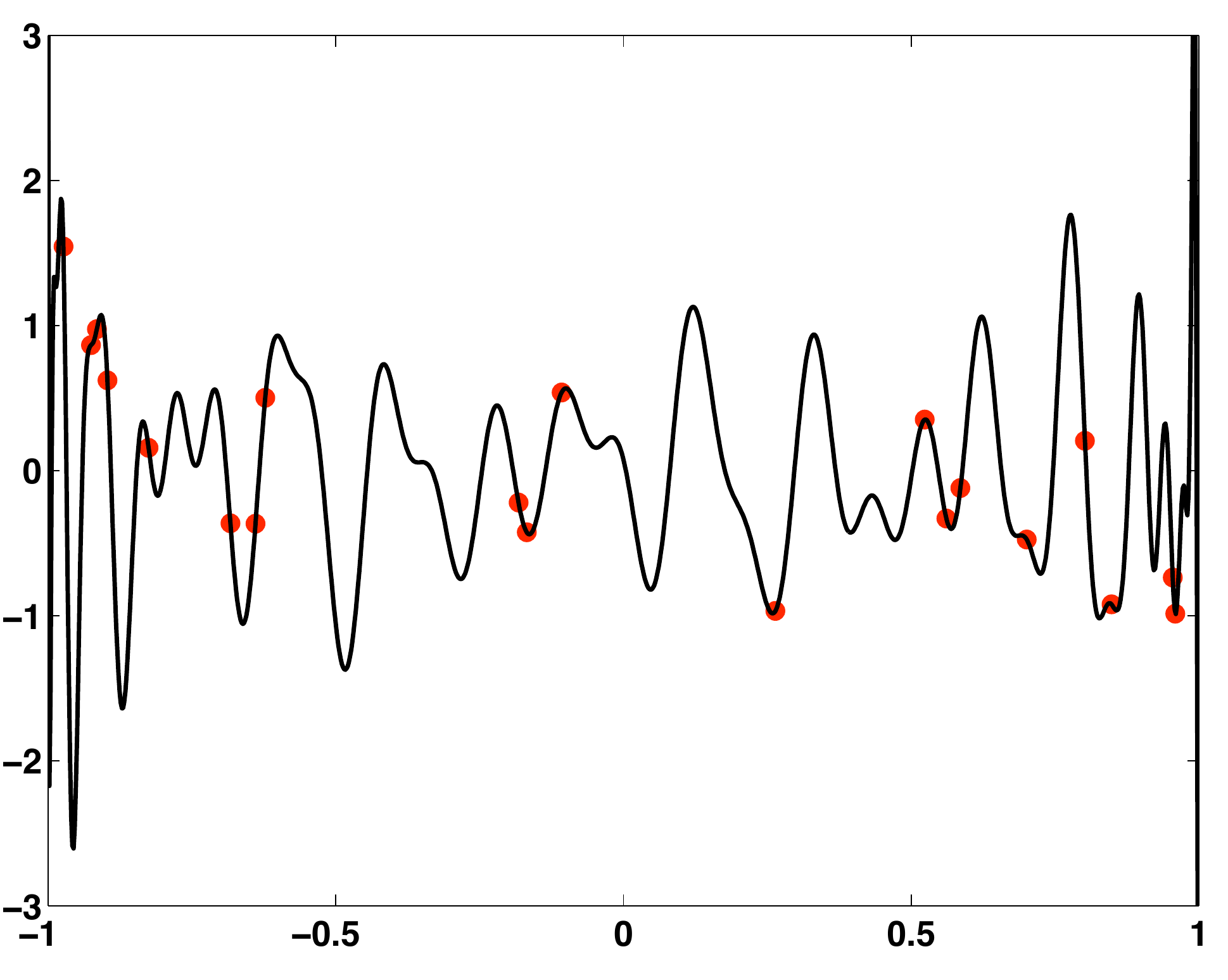}
}\hfill
\subfigure{
\includegraphics[width=0.45\textwidth]{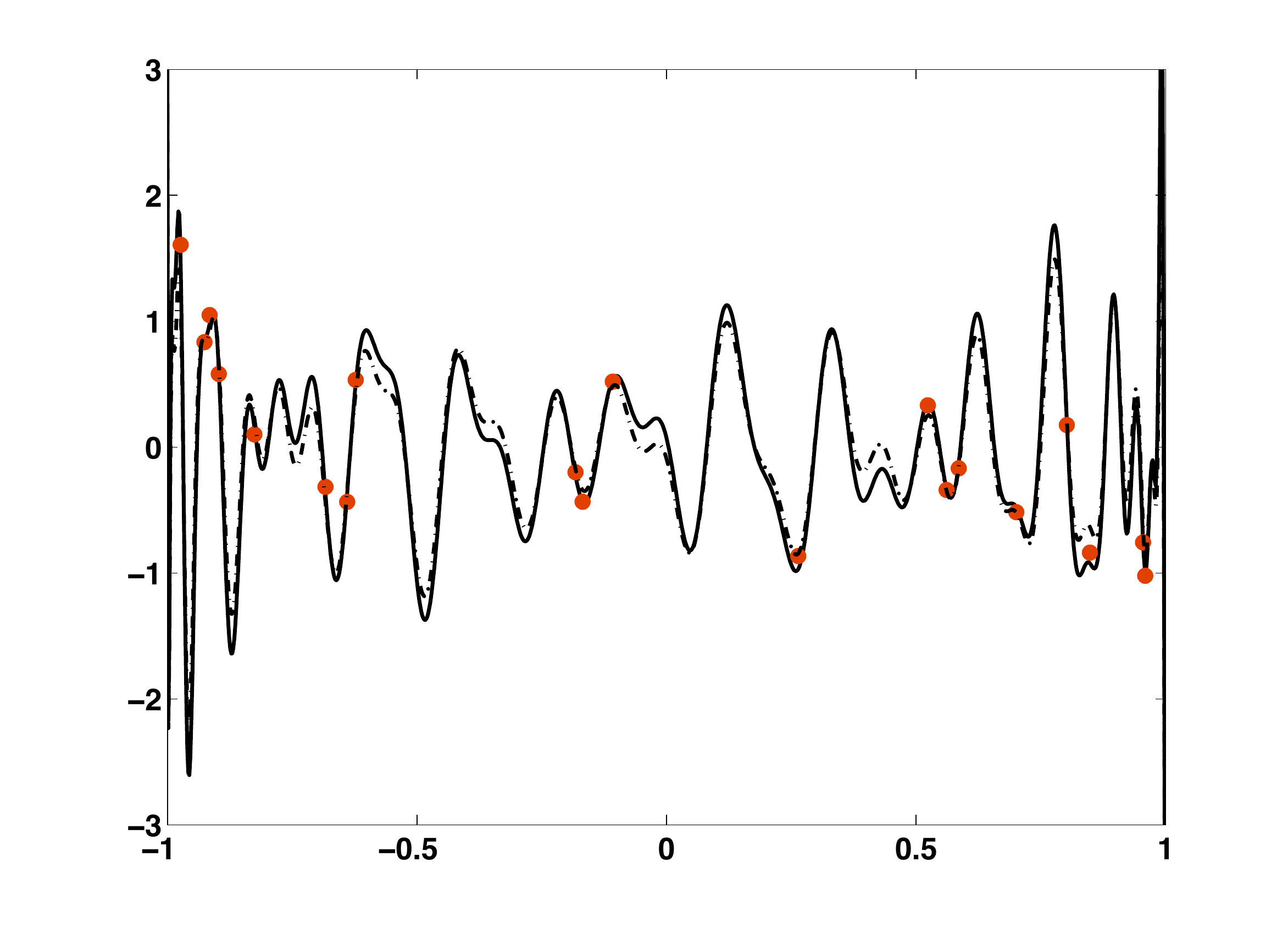}
}
\caption{(a) A Legendre-$5$-sparse polynomial of maximal degree $N = 80$, and its exact reconstruction from $20$ samples drawn independently from the Chebyshev distribution.  (b) The same polynomial (solid line), and its approximate reconstruction from $20$ samples corrupted with noise (dashed line).}
\end{figure}

%\pagebreak

To be more complete, we plot a phase diagram illustrating, for $N = 300$, and several values of $s/m$ and $m/N$ between $0$ and $.7$, the success rate of $\ell_1$-minimization in exactly recovering Legendre $s$-sparse polynomials $g(x) = \sum_{k=0}^{N-1} c_k L_k(x)$.  
%More precisely, for each set of parameters $(\frac{s}{m}, \frac{m}{N})$, we perform $20$ iterations of the following trial:  choosing a random support set from $[N]$ of cardinality $s$, we form an $s$-sparse coefficient vector $c$ by assigning independent and identically distributed Gaussian weights as the coefficients to this support.  We then draw $m$ measurements  independently 
%from the Chebyshev probability measure which we use to recover the $s$-sparse vector using $\ell_1$-minimization.  For each parameter pair $(s/m, m/N)$, we record the frequency of success of $\ell_1$-minimization out of $20$ trials.  
The results, illustrated in Figure $2$, show a sharp transition between uniform
recovery (in black) and no recovery whatsoever (white).  This transition curve is similar to the phase transition curves obtained for other compressive sensing matrix ensembles, e.g.\ the random partial discrete Fourier matrix or the Gaussian ensemble.  For more details, we refer the reader to \cite{dt}.

%To be more complete, we plot a phase diagram illustrating, for $N = 300$, and several values of $s/m$ and $m/N$ between $0$ and $1$, the success rate of $\ell_1$-minimization in exactly recovering Legendre $s$ sparse polynomials $g(t) = \sum_{k=0}^{N-1} c_k L_k(t)$.  
%%More precisely, for each set of parameters $(\frac{s}{m}, \frac{m}{N})$, we perform $20$ iterations of the following trial:  choosing a random support set from $[N]$ of cardinality $s$, we form an $s$-sparse coefficient vector $c$ by assigning independent and identically distributed Gaussian weights as the coefficients to this support.  We then draw $m$ measurements  independently 
%%from the Chebyshev probability measure which we use to recover the $s$-sparse vector using $\ell_1$-minimization.  For each parameter pair $(s/m, m/N)$, we record the frequency of success of $\ell_1$-minimization out of $20$ trials.  
%The results, illustrated in Figure $2$, show a sharp transition between uniform
%recovery (in black) and no recovery whatsoever (white).  This transition curve is similar to the phase transition curves obtained for other compressive sensing matrix ensembles, e.g.\ the random partial discrete Fourier matrix or the Gaussian ensemble.  For more details, we refer the reader to \cite{dt}.

\begin{SCfigure}
\centering
{\label{legendre:plot}
    \psfrag{ylabel}{$\frac{s}{m}$}
                \psfrag{xlabel}{$\frac{m}{N}$}}
\includegraphics[width=7cm, height = 6cm]{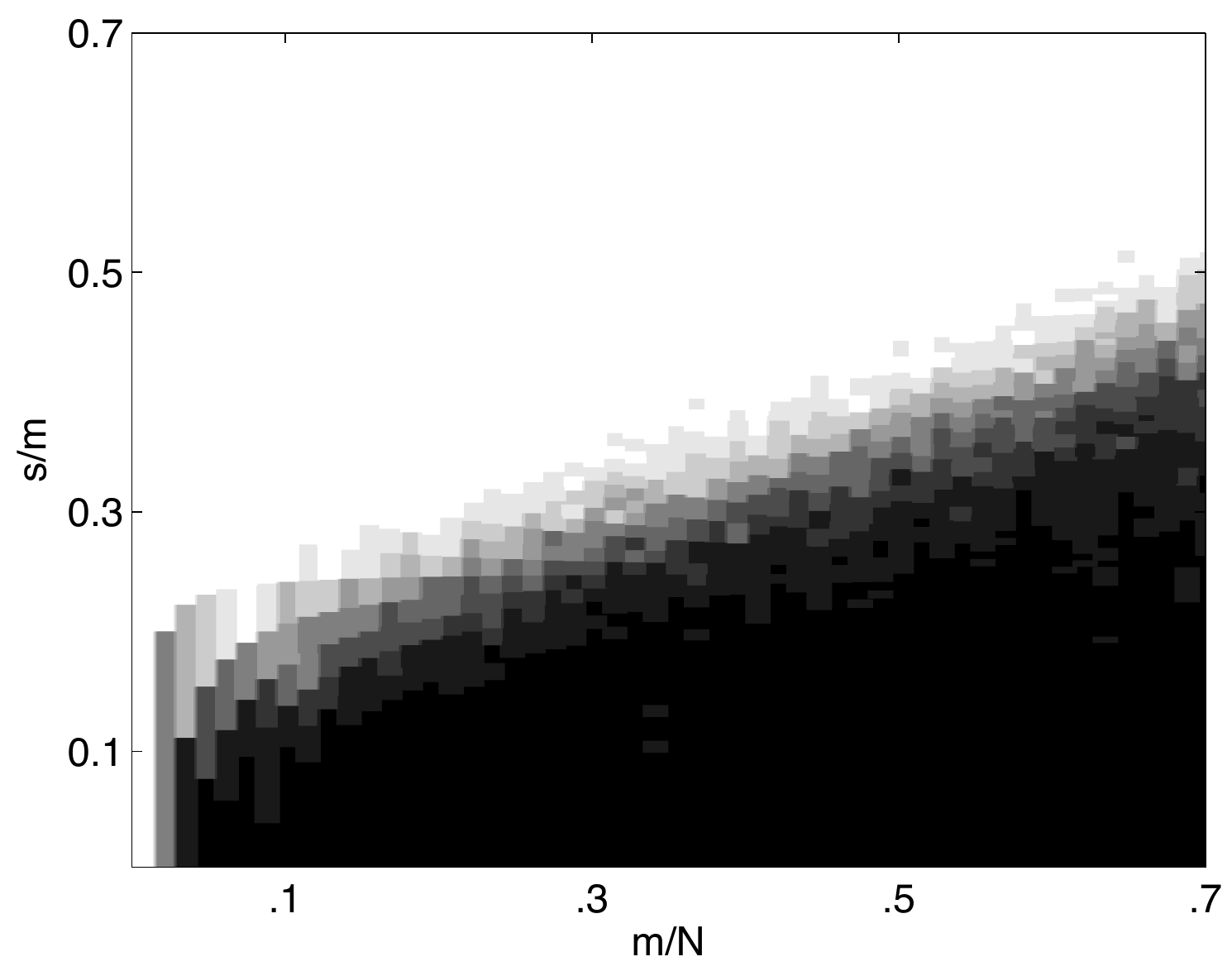}
\caption{Phase diagram illustrating the transition between uniform recovery (black) and no recovery whatsoever (white) of Legendre-sparse polynomials of sparsity level $s$ and using $m$ measurements, as $s$ and $m$ vary over the range $s \leq m \leq N = 300$.  In particular, for each pair $(s/m, m/N)$, we record the rate of success out of $50$ trials of $\ell_1$-minimization in recovering $s$-sparse coefficient vectors with random support over $[N]$ and with i.i.d. standard Gaussian coefficients from $m$ measurements distributed according to the Chebyshev measure.  }
\centering
\end{SCfigure}

%\begin{SCfigure}
%\centering
%{\label{legendre:plot}
%    \psfrag{ylabel}{$\frac{s}{m}$}
%                \psfrag{xlabel}{$\frac{m}{N}$}}
%\includegraphics[width=7cm, height = 6cm]{}
%\caption{Phase diagram illustrating the transition between uniform recovery (black) and no recovery whatsoever (white) of Legendre-sparse polynomials of sparsity level $s$ and using $m$ measurements, as $s$ and $m$ vary over the range $s \leq m \leq N = 300$.  In particular, for each pair $(s/m, m/N)$, we record the rate of success out of $20$ trials of $\ell_1$-minimization in recovering $s$-sparse coefficient vectors with random support over $[N]$ and with i.i.d. standard Gaussian coefficients from $m$ measurements distributed according to the Chebyshev measure.  }
%\centering
%\end{SCfigure}

\section{Sparse recovery via restricted isometry constants}
 We prove Theorem \ref{uniform:noise} by showing that the preconditioned Legendre matrix ${\cal A}\Phi$ satisfies the  \emph{restricted isometry property} (RIP) \cite{cata06,carota06-1}.  To begin, let us recall the notion of restricted isometry constants for a matrix $\Psi$.

\begin{definition}[Restricted isometry constants]
Let $\Psi \in \C^{m \times N}$. For $s \leq N$, the restricted isometry constant $\delta_s$ 
associated to $\Psi$ is the smallest number $\delta$ for which
\begin{equation}\label{def:RIP}
(1-\delta) \|c\|_2^2 \leq \|\Psi c\|_2^2 \leq (1+\delta) \|c\|_2^2
\end{equation}
for all $s$-sparse vectors $c \in \C^N$.
\end{definition}

Informally, the matrix $\Psi$ is said to have the restricted isometry property if
$\delta_s$ is small for $s$ reasonably large compared to $m$.
For matrices satisfying the restricted isometry property, the following $\ell_1$-recovery results can be shown
\cite{carota06-1,ca08,fola09,fo09}.

\begin{theorem}[Sparse recovery for RIP-matrices]
\label{thm:l1:stable} 
Let $\Psi \in \C^{m \times N}$. Assume
that its restricted isometry constant $\delta_{2s}$ 
satisfies
\begin{equation}
\label{RIP:const}
\delta_{2s} < 3/(4 + \sqrt{6}) \approx 0.4652.
\end{equation}
Let $x \in \C^N$ and assume noisy measurements $y = \Psi x + \eta$ are given with $\|\eta\|_2 \leq \varepsilon$. Let $x^\#$ be the minimizer of 
\begin{align}\label{l1eps:prog}
\arg \min_{z \in \C^N} \quad \| z \|_1 \mbox{ subject to } \quad \|\Psi z - y \|_2 \leq \varepsilon.
\end{align}
Then
\begin{align}
\label{l2noise}
\|x - x^\#\|_2 \leq  C_1 \frac{\sigma_s(x)_1}{\sqrt{s}} + C_2 \varepsilon,
\end{align}
and
\begin{align}
\label{l1noise}
\|x - x^\#\|_1 \leq D_1 \sigma_s(x)_1 + D_2 \sqrt{s} \varepsilon.
\end{align}
The constants $C_1, D_1, C_2, D_2 >0$ depend only on $\delta_{2s}$.
In particular, if $x$ is $s$-sparse then reconstruction is exact, $x^\# = x$.
\end{theorem}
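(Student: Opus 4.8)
The plan is to follow the standard restricted-isometry argument for $\ell_1$-recovery, controlling the error vector $h = x^\# - x$ by combining the geometry of the $\ell_1$-ball with the near-isometry afforded by $\delta_{2s}$. Two elementary facts drive everything. First, since both $x$ and $x^\#$ are feasible for the constraint in \eqref{l1eps:prog}, the triangle inequality gives $\|\Psi h\|_2 \leq \|\Psi x^\# - y\|_2 + \|y - \Psi x\|_2 \leq 2\varepsilon$. Second, since $x^\#$ minimizes the $\ell_1$-norm among feasible points, $\|x^\#\|_1 \leq \|x\|_1$; writing $T_0$ for the support of the best $s$-term approximation of $x$ and expanding $\|x + h\|_1$ over $T_0$ and its complement yields the cone estimate $\|h_{T_0^c}\|_1 \leq \|h_{T_0}\|_1 + 2\sigma_s(x)_1$.

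Next I would exploit the cone estimate to reduce the tail of $h$ to its bulk. Partition $T_0^c$ into index blocks $T_1, T_2, \dots$ of size $s$, arranged so that the entries of $h$ decrease in magnitude from one block to the next. The usual ``shifting'' bound then gives $\sum_{j \geq 2} \|h_{T_j}\|_2 \leq s^{-1/2}\|h_{T_0^c}\|_1$, and combining this with the cone estimate and $\|h_{T_0}\|_1 \leq \sqrt{s}\,\|h_{T_0}\|_2$ produces
\[
\sum_{j \geq 2} \|h_{T_j}\|_2 \;\leq\; \|h_{T_0}\|_2 + 2 s^{-1/2}\sigma_s(x)_1 .
\]
This is the key structural inequality, saying that up to the approximation error the out-of-support mass of $h$ is controlled by its restriction to $T_0$.

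I would then bring in the RIP on the dominant block $T_{01} = T_0 \cup T_1$. Writing $\|\Psi h_{T_{01}}\|_2^2 = \scalprod{\Psi h_{T_{01}}}{\Psi h} - \sum_{j \geq 2}\scalprod{\Psi h_{T_{01}}}{\Psi h_{T_j}}$, I would bound the first term by $2\varepsilon\sqrt{1+\delta_{2s}}\,\|h_{T_{01}}\|_2$ using $\|\Psi h\|_2 \leq 2\varepsilon$, and, after splitting $h_{T_{01}} = h_{T_0} + h_{T_1}$ into two size-$s$ pieces, bound each cross term via the standard RIP consequence $|\scalprod{\Psi u}{\Psi v}| \leq \delta_{2s}\|u\|_2\|v\|_2$, valid for $u,v$ with disjoint supports of total size at most $2s$. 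Together with the lower RIP bound $(1-\delta_{2s})\|h_{T_{01}}\|_2^2 \leq \|\Psi h_{T_{01}}\|_2^2$ and the structural inequality above, this isolates $\|h_{T_{01}}\|_2$, hence $\|h\|_2 \leq \|h_{T_{01}}\|_2 + \sum_{j \geq 2}\|h_{T_j}\|_2$, in terms of $\varepsilon$ and $s^{-1/2}\sigma_s(x)_1$; the $\ell_1$-bound \eqref{l1noise} then follows by feeding this back into the cone estimate.

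The delicate part is the bookkeeping of constants needed to push the admissible threshold all the way to $\delta_{2s} < 3/(4+\sqrt 6)$. A crude grouping of the cross terms only yields the weaker condition $\delta_{2s} < \sqrt 2 - 1$; reaching the sharper constant requires a more careful handling of the inner products $\scalprod{\Psi h_{T_0}}{\Psi h_{T_j}}$ and $\scalprod{\Psi h_{T_1}}{\Psi h_{T_j}}$ --- optimizing how the two size-$s$ pieces of $T_{01}$ are played against the tail --- so that the coefficient multiplying $\|h_{T_{01}}\|_2$ remains strictly below $1-\delta_{2s}$ under the stated hypothesis. Since this theorem is quoted from \cite{carota06-1,ca08,fola09,fo09}, I would simply invoke the optimized estimate of those references rather than re-derive the extremal constant here.
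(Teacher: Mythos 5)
Your proposal is correct and matches the paper's treatment: the paper itself does not prove this theorem but cites \cite{ca08,fola09,fo09} for the $\ell_2$-bound and the successively sharpened constants, noting only that the $\ell_1$-bound \eqref{l1noise} follows ``straightforwardly from these calculations'' --- which is exactly the step you supply by feeding the bound on $\|h_{T_{01}}\|_2$ back into the cone estimate. Your sketch of the tube/cone/block-decomposition argument is the standard one underlying those references, and your decision to invoke \cite{fo09} for the extremal constant $3/(4+\sqrt{6})$ rather than re-derive it is the same choice the authors make.
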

The constant in \eqref{RIP:const} is %{\color{red} 
the result of several refinements. %} 
Cand{\`e}s provided the value
$\sqrt{2}-1$ in \cite{ca08}, Foucart and Lai the value $0.45$ in \cite{fola09}, while
the version in \eqref{RIP:const} was shown in \cite{fo09}. 
The proof of \eqref{l2noise} can be found in \cite{ca08}.  The $\ell_1$-error bound \eqref{l1noise} is straightforward from these calculations, but does not seem to appear explicitly in the literature.  

\medskip

So far, all good constructions of matrices with the restricted isometry property use randomness. 
The RIP constant for a matrix whose entries are (properly normalized) independent and identically distributed Gaussian or Bernoulli random variables satisfies $\delta_{s} \leq \delta$ with probability
at least $1- e^{-c_1(\delta) m}$ provided
\begin{equation}
m \geq  c_2(\delta) s \log(N/s); 
\label{s}
\end{equation}
see for example \cite{badadewa08,cata06,rascva08,ra09-1}. To be more precise, it can be shown that 
$c_1(\delta) = C_1 \delta^{2}$ and $c_2(\delta) = C_2 \delta^{-2}$. 
Lower bounds for Gelfand widths of $\ell_1$-balls show that the bound \eqref{s} is optimal \cite{do06-2,codade09,foparaul10}. 

 If one allows for slightly more measurements than the optimal number \eqref{s}, the restricted isometry property also holds for a rich class of \emph{structured} random matrices; the structure of these matrices allows for fast matrix-vector multiplication, which accelerates the speed of reconstruction procedures such as $\ell_1$ minimization.
A quite general class of structured random matrices are those associated to \emph{bounded orthonormal systems}.  This concept is introduced in \cite{ra09-1}, although it is already contained somewhat implicitly
in \cite{cata06,ruve08} for discrete systems.  Let ${\cal{D}}$ be a measurable space -- for instance, a measurable subset of
$\R^d$ -- endowed with a probability measure $\nu$. Further, let $\{ \psi_j$, $j \in [N] \}$, 
be an orthonormal system of (real or complex-valued) functions on ${\cal D}$, i.e.,
\begin{equation}
\int_{\cal D} \psi_j(x) \overline{\psi_k(x)} d\nu(x) = \delta_{j,k}, \quad k,j \in [N].
\end{equation}
If this orthonormal system is uniformly bounded,
\begin{equation}
\label{Linf_bound}
\sup_{j \in [N]} \|\psi_j\|_\infty = \sup_{j \in [N]} \sup_{x \in {\cal D}} |\psi_j(x)| \leq K
\end{equation}
for some constant $K\geq 1$, we call systems $\{\psi_j\}$ satisfying
this condition \emph{bounded orthonormal systems}.

\begin{theorem}[RIP for bounded orthonormal systems]
\label{thm:BOS:RIP} 
Consider 
the matrix $\Psi \in \C^{m \times N}$ with entries
\begin{equation}\label{def:Phi:matrix}
\Psi_{\ell,k} = \psi_k(x_\ell), \quad \ell \in [m], k \in [N],
\end{equation}
formed by i.i.d.\ samples $x_\ell$ drawn from the orthogonalization measure $\nu$
associated to the bounded
orthonormal system $\{ \psi_j$, $j \in [N] \}$ having uniform bound $K\geq 1$ in \eqref{Linf_bound}. 
If 
\begin{equation}\label{BOS:RIP:cond}
m \geq C\delta^{-2} K^2 s \log^3(s) \log(N),
\end{equation}
then with probability at least 
$1-N^{-\gamma \log^3(s)},$ %$1-\exp\big(- \gamma \log^2(s) \log(m) \log(N)\big) \geq 1-N^{-\gamma}$
the restricted isometry constant $\delta_s$ of $\frac{1}{\sqrt{m}} \Psi$ satisfies $\delta_s \leq \delta$. The constants $C,\gamma>0$ are universal.
\end{theorem}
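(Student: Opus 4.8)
The plan is to control the restricted isometry constant of $\frac{1}{\sqrt m}\Psi$ by first bounding its expectation and then establishing concentration around the mean. Writing $D_{s,N} = \{x \in \C^N : \|x\|_0 \leq s, \ \|x\|_2 \leq 1\}$ and letting $X_\ell = (\psi_1(x_\ell),\dots,\psi_N(x_\ell))$ denote the $\ell$-th sampled vector, orthonormality of the system gives $\mathbb{E}\,|\scalprod{X_\ell}{x}|^2 = \|x\|_2^2$, so that
\[
\delta_s = \sup_{x \in D_{s,N}} \Big| \frac1m \sum_{\ell=1}^m \big( |\scalprod{X_\ell}{x}|^2 - \mathbb{E}\,|\scalprod{X_\ell}{x}|^2 \big) \Big|.
\]
Thus $\delta_s$ is the supremum of a centered empirical process indexed by the (nonconvex) set $D_{s,N}$, and the boundedness hypothesis \eqref{Linf_bound} enters through the deterministic bound $|\scalprod{X_\ell}{x}| \leq K\sqrt s$ valid for $x \in D_{s,N}$.

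First I would bound $\mathbb{E}\,\delta_s$ by symmetrization, which replaces the process by a Rademacher chaos: introducing independent signs $\varepsilon_\ell$,
\[
\mathbb{E}\,\delta_s \leq 2 \, \mathbb{E}_X \mathbb{E}_\varepsilon \sup_{x \in D_{s,N}} \Big| \frac1m \sum_{\ell=1}^m \varepsilon_\ell |\scalprod{X_\ell}{x}|^2 \Big|.
\]
Conditionally on the sample $X = (X_1,\dots,X_m)$, the inner quantity is a subgaussian process in $\varepsilon$ with respect to the pseudometric $d(x,y) = \frac1m\big(\sum_\ell (|\scalprod{X_\ell}{x}|^2 - |\scalprod{X_\ell}{y}|^2)^2\big)^{1/2}$, so Dudley's entropy integral bounds $\mathbb{E}_\varepsilon\sup$ by $C\int_0^{\kappa}\sqrt{\log N(D_{s,N}, d, u)}\,du$, where the radius $\kappa = \sup_x d(x,0)$ is comparable to $\frac{K\sqrt s}{\sqrt m}\,(1+\delta_s)^{1/2}$ after using $\max_\ell|\scalprod{X_\ell}{x}| \leq K\sqrt s$ and $\frac1m\sum_\ell |\scalprod{X_\ell}{x}|^2 \leq 1+\delta_s$.

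The heart of the argument, and the main obstacle, is estimating the covering numbers. Factoring $|\scalprod{X_\ell}{x}|^2 - |\scalprod{X_\ell}{y}|^2$ and applying Cauchy--Schwarz, the increment $d$ is dominated by $\|x-y\|_X := \max_\ell|\scalprod{X_\ell}{x-y}|$ weighted by a factor $(1+\delta_s)^{1/2}$, which reduces the task to estimating $N(D_{s,N}, \|\cdot\|_X, u)$. Here I would use two complementary estimates: at small scales a volumetric bound exploiting that $D_{s,N}$ is a union of $\binom{N}{s}$ unit balls of dimension $s$, giving $\log N \lesssim s\log(eN/(su))$, and at large scales an empirical/Maurey-type bound exploiting $\|X_\ell\|_\infty \leq K$, giving $\log N \lesssim K^2 u^{-2}\log N$ up to logarithmic corrections. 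Splitting Dudley's integral at the crossover scale and carrying out the integration produces the factor $K\sqrt{s}\,\log^{3/2}(s)\sqrt{\log N}$, the accumulated powers of $\log s$ coming from the nested covering estimates; crucially, the radius $\kappa$ reintroduces the $(1+\delta_s)^{1/2}$, so after taking expectations one arrives at a self-referential inequality of the form
\[
\mathbb{E}\,\delta_s \leq C\big( E \sqrt{1+\mathbb{E}\,\delta_s} + E^2 \big), \qquad E := \sqrt{\tfrac{K^2 s \log^3(s)\log(N)}{m}}.
\]
Solving this quadratic-type inequality shows $\mathbb{E}\,\delta_s \leq \delta/2$ once $m \geq C\delta^{-2} K^2 s \log^3(s)\log(N)$.

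Finally I would upgrade the bound on the mean to a high-probability statement. Since $\delta_s$ is the supremum of an empirical process with increments bounded via $|\scalprod{X_\ell}{x}|^2 \leq K^2 s$, a concentration inequality for suprema of empirical processes of Talagrand/Bousquet type yields a deviation bound $\P(\delta_s \geq \mathbb{E}\,\delta_s + t)$ that decays exponentially in $mt^2/(K^2 s)$; choosing $t = \delta/2$ and inserting the hypothesis on $m$ produces the stated failure probability $N^{-\gamma\log^3(s)}$. The delicate points are the two-scale covering estimate and the bookkeeping of logarithmic factors, together with the fact that the random Dudley radius forces the fixed-point argument rather than a direct bound; the concentration step is comparatively routine once the increments are controlled.
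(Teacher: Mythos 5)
The paper offers no proof of this theorem: it is imported verbatim (modulo a slightly reshuffled condition on $m$) from the reference \cite{ra09-1}, so there is no internal argument to compare against. Your sketch accurately reproduces the proof strategy of that cited source --- symmetrization, Dudley's entropy integral under the pseudometric induced by $\max_\ell|\scalprod{X_\ell}{x-y}|$, the two-scale covering estimate (volumetric at small scales, Maurey's empirical method at large scales, where the large-scale bound should carry a factor $s$ from $D_{s,N}\subset\sqrt{s}\,B_1^N$, consistent with your ``logarithmic corrections'' hedge), the resulting self-bounding inequality for $\E\,\delta_s$, and a Talagrand/Bousquet-type deviation bound --- and the bookkeeping lands on the stated condition \eqref{BOS:RIP:cond} and failure probability.
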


We note that condition \eqref{BOS:RIP:cond} is stated slightly different in \cite{ra09-1}, namely as
\[
\frac{m}{\log(m)} \geq C\delta^{-2} K^2 s \log^2(s) \log(N).
\]
However, it is easily seen that \eqref{BOS:RIP:cond} implies this condition (after possibly adjusting constants). % $C$.
Note also  that \eqref{BOS:RIP:cond} is implied by the simpler condition
\[
m \geq C K^2 \delta^{-2} s \log^4(N).
\]

\medskip

An important special case of a bounded orthonormal system is the random partial Fourier matrix, which is formed by choosing a random subset of $m$ rows from the $N \times N$ discrete Fourier matrix.   The continuous analog of this system is the matrix associated to the trigonometric polynomial basis 
$\{ x \mapsto e^{2\pi i n x } , \quad n = 0, \hdots, N-1 \}$ evaluated at $m$ sample points chosen 
independently from the uniform measure on $[0,1]$.  Note that the trigonometric system has 
corresponding optimal uniform bound $K=1$.  Another example is the matrix associated to the 
Chebyshev polynomial system evaluated at sample points chosen independently 
from the corresponding orthogonalization measure, the Chebyshev measure. In this case, $K = \sqrt{2}$.  

\section{Proof of Theorem \ref{uniform:noise}}

As a first approach towards recovering Legendre-sparse polynomials from random samples, one may try to apply Theorem \ref{thm:BOS:RIP} directly, selecting the sampling points $\{ x_j, j \in [m] \}$, independently from the normalized Lebesgue measure on $[-1,1]$, the orthogonalization measure for the Legendre polynomials.  However, as shown in \cite{szego}, the $L^{\infty}$-norms of the Legendre polynomials grow according to $\| L_n \|_{\infty} = | L_n (1) | = |L_n(-1) | = (2n + 1)^{1/2}$.  
Applying $K = \| L_{N-1} \|_{\infty} = (2N-1)^{1/2}$ in Theorem \ref{thm:BOS:RIP} produces a required number of samples
\[
m \asymp N \delta^{-2} s \log^3(s) \log(N). 
\]
Of course, this bound is completely useless, because the required number of samples
is now larger than $N$ -- an almost trivial estimate. Therefore, in order to deduce
sparse recovery results for the Legendre polynomials, we must take a different approach.

Despite growing unboundedly with increasing degree at the endpoints $+1$ and $-1$,  an 
important characteristic of the Legendre polynomials is that they are 
%uniformly bounded on any strict closed subinterval of $[-1,1]$, 
%despite growing unboundedly with increasing degree at the endpoints $+1$ and $-1$.  
all bounded by the same envelope function.
The following result \cite[Theorem 7.3.3]{szego}, gives a precise estimate 
for this bound.
 
\begin{lemma}
\label{thm:growth}
For all $n \geq 1$ and for all $x \in [-1,1]$,
\[
\label{uniform_bound}
(1 - x^2)^{1/4} | \thinspace L_n(x) \thinspace | < 2\pi^{-1/2}\Big( 1 + \frac{1}{2n} \Big)^{1/2}, \hspace{5mm} -1 \leq x \leq 1;
\]
here, the constant $2 \pi^{-1/2}$ cannot be replaced by a smaller one. 
\end{lemma}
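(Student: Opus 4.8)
The plan is to reduce the claim to a sharp pointwise bound on the classical Legendre polynomials and then to exploit a Sturm--Sonin monotonicity argument. Since the orthonormalized polynomials satisfy $L_n = \sqrt{2n+1}\,P_n$, where $P_n$ denotes the standard Legendre polynomial with $P_n(1)=1$ (consistent with the stated value $\|L_n\|_\infty = \sqrt{2n+1}$), the substitution $x = \cos\theta$ converts the asserted inequality into the equivalent statement that
\[
u_n(\theta) := (\sin\theta)^{1/2} P_n(\cos\theta) \quad\text{satisfies}\quad u_n(\theta)^2 < \frac{2}{\pi n}, \qquad \theta \in (0,\pi).
\]
This is equivalent because $(1-x^2)^{1/4}|L_n(x)| = \sqrt{2n+1}\,|u_n(\theta)|$ and $\sqrt{2n+1}\,\sqrt{2/(\pi n)} = 2\pi^{-1/2}(1+1/(2n))^{1/2}$.

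First I would place the Legendre equation in Liouville normal form: writing the equation satisfied by $P_n(\cos\theta)$ and substituting $P_n(\cos\theta) = (\sin\theta)^{-1/2}u_n(\theta)$ eliminates the first-order term and yields
\[
u_n'' + \phi(\theta)\, u_n = 0, \qquad \phi(\theta) = \Big(n+\tfrac12\Big)^2 + \frac{1}{4\sin^2\theta},
\]
with $' = d/d\theta$, where $\phi$ is strictly positive and strictly decreasing on $(0,\pi/2]$. The key step is then to introduce the Sonin function $M(\theta) = u_n(\theta)^2 + u_n'(\theta)^2/\phi(\theta)$; a direct differentiation using the ODE collapses its derivative to $M'(\theta) = -u_n'(\theta)^2\,\phi'(\theta)/\phi(\theta)^2 \ge 0$ on $(0,\pi/2]$. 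Since $u_n(\pi-\theta)^2 = u_n(\theta)^2$, $u_n'(\pi-\theta)^2 = u_n'(\theta)^2$, and $\phi(\pi-\theta) = \phi(\theta)$, the function $M$ is symmetric about $\pi/2$ and hence nonincreasing on $[\pi/2,\pi)$; it therefore attains its maximum at $\theta=\pi/2$, yielding the uniform bound $u_n(\theta)^2 \le M(\pi/2)$ for every $\theta$.

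It remains to estimate $M(\pi/2)$. There $\phi(\pi/2) = n^2+n+\tfrac12$, $u_n(\pi/2) = P_n(0)$, and $u_n'(\pi/2) = -P_n'(0)$, and by parity exactly one of $P_n(0)$, $P_n'(0)$ vanishes for each $n\ge 1$. Using $P_n'(0) = nP_{n-1}(0)$, both the even case ($M(\pi/2) = P_n(0)^2$) and the odd case ($M(\pi/2) = n^2 P_{n-1}(0)^2/(n^2+n+\tfrac12)$) reduce to the central quantity $w_m = \binom{2m}{m}4^{-m} = |P_{2m}(0)|$, and the target becomes $M(\pi/2) < 2/(\pi n)$. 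I expect the \emph{main obstacle} to lie precisely here: the textbook Wallis bound $w_m < 1/\sqrt{\pi m}$ comfortably handles even $n$ but is just barely too weak for odd $n$ (it leaves a deficit of order $n$ in the required polynomial inequality), so one must instead invoke the refined estimate $w_m^2 < 1/(\pi(m+\tfrac14))$, equivalent to $\Gamma(m+\tfrac12)/\Gamma(m+1) < (m+\tfrac14)^{-1/2}$, after which both parities close with strict inequality. Finally, sharpness of the constant $2\pi^{-1/2}$ is read off from the even case at $x=0$: since $w_m \sim 1/\sqrt{\pi m}$, one has $(1-0^2)^{1/4}|L_n(0)| = \sqrt{2n+1}\,w_{n/2} \to 2\pi^{-1/2}$ as $n\to\infty$ through even values, so no smaller constant can be admissible.
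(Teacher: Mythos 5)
Your proof is correct: the paper does not prove this lemma itself but simply cites it as Theorem 7.3.3 of Szeg\H{o}, and your argument --- the Liouville normal form $u_n''+\phi u_n=0$ with $\phi(\theta)=(n+\tfrac12)^2+\tfrac{1}{4\sin^2\theta}$, the Sonin function $M=u_n^2+(u_n')^2/\phi$ monotone toward $\theta=\pi/2$ by symmetry, and the evaluation of $M(\pi/2)$ via $P_n(0)$, $P_n'(0)=nP_{n-1}(0)$ and the refined Wallis-type bound $w_m^2<\bigl(\pi(m+\tfrac14)\bigr)^{-1}$ --- is precisely the classical proof of that cited result, including the sharpness statement read off from $L_{2m}(0)$. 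You also correctly flag the one delicate point, namely that the cruder bound $w_m<(\pi m)^{-1/2}$ suffices for even $n$ but not for odd $n$, where the $m+\tfrac14$ refinement (which follows, e.g., from the monotone convergence of $w_m^2(m+\tfrac14)$ up to $1/\pi$) is genuinely needed.
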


\begin{proof}[Proof of Theorem \ref{uniform:noise}]

In light of Lemma \ref{thm:growth}, we apply a preconditioning %``change of variables" 
technique to transform the Legendre polynomial system into a bounded orthonormal system.  Consider the functions 
\begin{equation}\label{reweight}
Q_n(x) = (\pi/2)^{1/2} (1 - x^2)^{1/4} L_n(x).
\end{equation}
The matrix $\Psi$ with entries $\Psi_{j,n}=Q_{n-1}(x_j)$ may be written as $\Psi = {\cal A} \Phi$ where ${\cal A}$ is the diagonal matrix with entries $a_{j,j} =  (\pi/2)^{1/2} (1 - x_j^2)^{1/4}$ as in Theorem \ref{thm:growth}, and $\Phi \in \R^{m \times N}$ is the Legendre matrix with entries $\Phi_{j,n} = L_{n-1}(x_j)$.  By Lemma \ref{thm:growth}, the system $\{ Q_n \}$ is uniformly bounded on $[-1,1]$ and satisfies the bound $\| Q_n \|_{\infty} \leq \sqrt{2 + \frac{1}{n}} \leq \sqrt{3}$.  Due to the orthonormality of the Legendre system with respect to the normalized Lebesgue measure on $[-1,1]$, the $Q_n$ are orthonormal with respect to the Chebyshev probability measure $d \nu(x) = \pi^{-1}(1 - x^2)^{-1/2} dx$ on $[-1,1]$:
\begin{eqnarray}
\label{ortho}
\int_{-1}^1 \pi^{-1}Q_n(x) Q_{k}(x) (1-x^2)^{-1/2} dx &=& \frac{1}{2}\int_{-1}^1 L_n(x) L_{k}(x) dx  = \delta_{n,k}. \nonumber
\end{eqnarray}

Therefore, the $\{ Q_n \}$ form a bounded orthonormal system in the sense of Theorem \ref{thm:BOS:RIP} with uniform bound $K = \sqrt{3}$.  By Theorem \ref{thm:BOS:RIP}, the renormalized matrix $\frac{1}{\sqrt{m}} \Psi$ has the restricted isometry property with constant $\delta_s \leq \delta$ with high probability once $m \geq C\delta^{-2}s \log^4(N)$.  We then apply Theorem \ref{thm:l1:stable} to the noisy samples $\frac{1}{\sqrt{m}} {\cal A}y$ where $y = \big( g(x_1) + \eta_1, ... , g(x_m) + \eta_m \big)$ and observe that $\| {\cal A} \eta \|_{\infty} \leq \varepsilon$ implies $\frac{1}{\sqrt{m}} \| {\cal A} \eta \|_2 \leq \varepsilon$.  This gives Theorem \ref{uniform:noise}.

\end{proof}

%With Lemma \ref{thm:growth} in hand, we shall use a trick that does not seem to have appeared 
%in the compressive sensing literature before. The basic idea is that the 
%$\ell_1$-minimization problem 
%\[
%\min \|z\|_1 \quad \mbox{ subject to } \quad \Phi z = \Phi c.
%\]
% has the same solution as
%\[
%\min \|z\|_1 \quad \mbox{subject to} \quad A \Phi z = A \Phi c,
%\]
%if $A \in \R^{m \times m}$ is an invertible matrix (although we will not need invertibility below). Therefore, it is actually enough
%to prove the restricted isometry property for $A \Phi$, where $A$ is a suitable matrix, as the choice of $A$ does not influence the minimizer of the $\ell_1$-minimization
%problem.  This forms the basis of our main Theorem.

%\begin{theorem}\label{RIP:precondition} 
%Consider then the functions 
%\begin{equation}\label{reweight}
%Q_n(x) = \sqrt{\frac{\pi}{2}} (1 - x^2)^{1/4} P_n(x), \hspace{5mm} x \in [-1,1], \hspace{5mm} n=0,1,\hdots
%\end{equation}
%Let $x_1,\hdots,x_m$ be chosen independently at random
%according to the Chebyshev measure on $[-1,1]$. 
%Let $\Psi \in \R^{m \times N}$ be the matrix with
%entries $\Psi_{\ell,n} = Q_{n}(x_\ell)$, noting that $\Psi = A \Phi$, where
%$\Phi_{\ell,n} = P_n(x_{\ell})$ and $A$ is the diagonal matrix $A_{n,n} = (1 - x_{\ell}^2)^{1/4}$.

%Assume that 
%\begin{equation}\label{RIP:prec:cond}
%\frac{m}{\log(m)} \geq C \delta^{-2} s \log^2(s) \log(N).
%\end{equation}
%Then with probability at least $1-N^{-\gamma \log^2(s) \log(m)}$ the restricted isometry
%constant of the matrix $\frac{1}{\sqrt{m}} \Psi$ satisfies $\delta_s \leq \delta$.
%\end{theorem}

\section{Universality of the Chebyshev measure}
The Legendre polynomials are orthonormal with respect to the uniform measure on $[-1,1]$; we may instead consider an arbitrary weight function $v$ on $[-1,1]$, and the polynomials $\{p_n\}$ that are orthonormal with respect to $v$.  
Subject to a mild continuity condition on $v$, a result similar to Lemma \ref{thm:growth} concerning the uniform growth of ${p_n}$ still holds, and the sparse recovery results of Theorem \ref{uniform:noise} extend to this more general scenario.  In all cases, the sampling points are chosen according to the Chebyshev measure. 

Let us recall the following general bound, see e.g.\ Theorem 12.1.4 in Szeg{\"o} \cite{szego}. 

\begin{theorem} 
\label{Lip:Dini}
Let $v$ be a weight function on $[-1,1]$ and set
$f_v(\theta) = v(\cos \theta)|\sin(\theta)|$. Suppose that
$f_v$ satisfies the Lipschitz-Dini condition, that is,
\begin{equation}\label{LipschitzDini}
|f_v(\theta+\delta) - f_v(\theta)| \leq L |\log(1/\delta)|^{-1-\lambda}, \quad \mbox{ for all } \theta \in [0,2\pi), \delta > 0,
\end{equation}
for some constants $L,\lambda > 0$. 
 Let $\{p_n, n \in \N_0\}$, be the associated orthonormal polynomial system. Then
\begin{equation}\label{weight:estimate}
(1-x^2)^{1/4} v(x)^{1/2} |p_n(x)| \leq C_v \quad \mbox{ for all } n\in \N, x \in [-1,1].
\end{equation}
The constant $C_v$ depends only on the weight function $v$. % in \eqref{LipschitzDini}.
\end{theorem}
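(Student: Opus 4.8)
The plan is to pass to the unit circle via the substitution $x = \cos\theta$ and then invoke Szeg{\"o}'s asymptotic theory for orthonormal polynomials, of which this uniform bound is a standard consequence. First I would record that the quantity to be bounded transforms cleanly: since $(1-x^2)^{1/4} = |\sin\theta|^{1/2}$ and $f_v(\theta) = v(\cos\theta)|\sin\theta|$, the claim \eqref{weight:estimate} is equivalent to
\[
f_v(\theta)^{1/2}\, |p_n(\cos\theta)| \leq C_v \qquad \text{for all } n \in \N,\ \theta \in [0,\pi].
\]
The change of variables turns the orthonormality of $\{p_n\}$ against $v$ on $[-1,1]$ into orthogonality of the even trigonometric polynomials $\theta \mapsto p_n(\cos\theta)$ against the circle weight $f_v$ on $[-\pi,\pi]$. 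In the Legendre case $v \equiv \mathrm{const}$ one has $f_v(\theta) = c\,|\sin\theta|$, and the displayed inequality is exactly Lemma \ref{thm:growth}; the theorem is thus the natural generalization from the constant weight to any Lipschitz--Dini weight.

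Next I would bring in the Szeg{\"o} function of the circle weight,
\[
D(z) = \exp\!\left( \frac{1}{4\pi} \int_{-\pi}^{\pi} \frac{e^{i\theta}+z}{e^{i\theta}-z}\, \log f_v(\theta)\, d\theta \right), \qquad |z| < 1,
\]
whose boundary values satisfy $|D(e^{i\theta})|^2 = f_v(\theta)$. This object exists because the Szeg{\"o} condition $\int_{-\pi}^{\pi}\log f_v > -\infty$ holds: although $f_v$ vanishes at $\theta = 0, \pi$ through the factor $|\sin\theta|$, this factor is integrable in the logarithm, so only the interior behavior of $v$ matters. Under the Szeg{\"o} condition the orthonormal polynomials admit the asymptotic representation
\[
f_v(\theta)^{1/2}\, p_n(\cos\theta) = \sqrt{\tfrac{2}{\pi}}\,\cos\!\big( (n+\tfrac12)\theta + \gamma(\theta) \big) + r_n(\theta),
\]
where the phase $\gamma$ is built from the argument of the boundary value of $D$ and $r_n$ is an error term. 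Since the leading cosine is bounded by $1$, the desired estimate follows as soon as $r_n(\theta)$ is controlled uniformly in both $n$ and $\theta$; one then reads off $C_v = \sqrt{2/\pi} + \sup_{n,\theta}|r_n(\theta)|$.

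The hard part will be exactly this uniform control of $r_n$, and it is here that the Lipschitz--Dini hypothesis \eqref{LipschitzDini} is indispensable. The modulus of continuity of the phase $\gamma$ and the uniform decay of $r_n$ are governed by the modulus of continuity of the harmonic conjugate (Hilbert transform) of $\log f_v$, and the calibrated rate $|\log(1/\delta)|^{-1-\lambda}$ in \eqref{LipschitzDini} is precisely a Dini-type condition ensuring that this conjugate function extends continuously to the boundary circle. I would therefore split $\log f_v = \log|\sin\theta| + \log v(\cos\theta)$, handle the explicit, $n$-independent conjugate of $\log|\sin\theta|$ directly, and devote the bulk of the work to showing that \eqref{LipschitzDini} forces the requisite Dini modulus on the $v$-part in the interior, while the endpoint blow-up of $p_n$ is absorbed by the vanishing factor $f_v(\theta)^{1/2}$ (equivalently the $(1-x^2)^{1/4}$ already present on the left of \eqref{weight:estimate}). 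Transcribing the resulting uniform Szeg{\"o} asymptotics, as carried out in \cite[Ch.~12]{szego}, and collecting the constants then yields \eqref{weight:estimate} with $C_v$ depending only on $L$, $\lambda$, and the size of $v$, as claimed.
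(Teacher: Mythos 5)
The paper offers no proof of this statement at all: it is imported verbatim as Theorem 12.1.4 of Szeg\H{o}'s monograph \cite{szego}, and your outline is exactly the standard Szeg\H{o}-asymptotics argument behind that cited theorem (pass to the circle via $x=\cos\theta$, form the Szeg\H{o} function of $f_v$, and read the bound off the uniform asymptotic formula), with the one genuinely hard step --- the uniform-in-$n$ and $\theta$ control of the remainder $r_n$ under the Lipschitz--Dini condition --- itself deferred back to \cite[Ch.~12]{szego}. So your proposal is consistent with, and in substance identical to, the paper's treatment; just note that as written it is a proof plan resting on the same external reference rather than a self-contained argument.
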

The Lipschitz-Dini condition \eqref{LipschitzDini} is satisfied for a range
of Jacobi polynomials $p_n = p_n^{(\alpha,\beta)}$, $n \geq 0,$ $\alpha,\beta \geq -1/2$, 
which are orthogonal
with respect to the weight function $v(x) = (1-x)^\alpha(1+x)^\beta$. 
%The Jacobi polynomials are well-defined for $\alpha, \beta \geq -1$, but if either $\alpha < -1/2$ or $\beta < -1/2$, the corresponding weight function $f_{v}(\theta) = v(\cos(\theta)) | \sin(\theta)|$ fails to be continuous.  
The Legendre polynomials are a special case of the Jacobi polynomials corresponding to $\alpha = \beta = 0$; more generally, the case $\alpha = \beta$ correspond to the ultraspherical polynomials. The Chebyshev polynomials are another important special case of ultraspherical polynomials, corresponding to parameters $\alpha = \beta = -1/2$, and Chebyshev measure.

For any orthonormal polynomial system satisfying a bound of the form \eqref{weight:estimate}, the following RIP-estimate applies.
\begin{theorem}
\label{RIP:precondition}
Consider a positive weight function $v$ on $[-1,1]$ satisfying the conditions of Theorem \ref{Lip:Dini}, and consider the orthonormal polynomial system $\{ p_n \}$ with respect to the probability measure $d\nu(x) = c\,v(x) dx$ on $[-1,1]$ where 
$c^{-1} = \int_{-1}^1 v(x)dx$.  

Suppose that $m$ sampling points $(x_1, \hdots, x_m)$ are drawn independently at random from the Chebyshev  
measure, and consider the $m \times N$ composite matrix $\Psi = {\cal A} \Phi$, where $\Phi$ is the matrix with entries $\Phi_{j,n} = p_{n-1}(x_j)$, and ${\cal A}$ is the diagonal matrix with entries $a_{j,j} = (c\pi)^{1/2} (1-x_j^2)^{1/4} v(x_j)^{1/2}$.   Assume that 
\begin{equation}\label{RIP:prec:cond}
m \geq C \delta^{-2} s \log^3(s) \log(N).       
\end{equation}
Then with probability at least $1-N^{-\gamma \log^3(s)}$ the restricted isometry
constant of the composite matrix $\frac{1}{\sqrt{m}} \Psi = \frac{1}{\sqrt{m}} {\cal A} \Phi$ satisfies $\delta_s \leq \delta$. The constant $C$ depends only on $v$, and the constant $\gamma > 0$ is universal.
\end{theorem}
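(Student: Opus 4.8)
The plan is to mirror the proof of Theorem \ref{uniform:noise}, replacing the specific Legendre preconditioning by one tailored to the general weight $v$. Guided by the envelope bound of Theorem \ref{Lip:Dini}, I would introduce the preconditioned functions
\begin{equation*}
Q_n(x) = (c\pi)^{1/2}(1-x^2)^{1/4} v(x)^{1/2} p_n(x), \qquad x \in [-1,1],
\end{equation*}
and observe that the composite matrix has entries $\Psi_{j,n} = a_{j,j}\,p_{n-1}(x_j) = Q_{n-1}(x_j)$, so that $\Psi$ is precisely the sampling matrix of the system $\{Q_n\}$ at the points $x_1,\dots,x_m$. The goal is then to show that $\{Q_n\}$ is a bounded orthonormal system in the sense of Theorem \ref{thm:BOS:RIP}, with orthogonalization measure equal to the Chebyshev measure, and to invoke that theorem.

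Two properties must be checked. First, uniform boundedness: by the estimate \eqref{weight:estimate} of Theorem \ref{Lip:Dini} one has $(1-x^2)^{1/4}v(x)^{1/2}|p_n(x)| \leq C_v$, whence $\|Q_n\|_\infty \leq (c\pi)^{1/2}C_v =: K$ (enlarged to $K \geq 1$ if necessary), a constant depending only on $v$. Second, orthonormality with respect to the Chebyshev measure $d\mu(x) = \pi^{-1}(1-x^2)^{-1/2}dx$. Here lies the observation that makes the Chebyshev measure \emph{universal}: squaring the preconditioning factor produces $(1-x^2)^{1/2}v(x)$, and against the Chebyshev density the factor $(1-x^2)^{1/2}$ cancels the singular factor $(1-x^2)^{-1/2}$, leaving
\begin{equation*}
\int_{-1}^1 Q_n(x)Q_k(x)\,d\mu(x) = c\int_{-1}^1 v(x)\,p_n(x)p_k(x)\,dx = \int_{-1}^1 p_n(x)p_k(x)\,d\nu(x) = \delta_{n,k},
\end{equation*}
by the assumed orthonormality of $\{p_n\}$ with respect to $d\nu = c\,v\,dx$. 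Thus the weight $v$ is precisely absorbed into the normalization $d\nu$, independently of which admissible $v$ we started from.

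With these two facts in hand, Theorem \ref{thm:BOS:RIP} applies to $\{Q_n\}$ with uniform bound $K$: since the sampling points are drawn from the Chebyshev measure --- the orthogonalization measure of $\{Q_n\}$ --- the renormalized matrix $\frac{1}{\sqrt{m}}\Psi$ has $\delta_s \leq \delta$ with probability at least $1 - N^{-\gamma\log^3(s)}$ provided $m \geq C\delta^{-2}K^2 s\log^3(s)\log(N)$. Since $K^2 = c\pi C_v^2$ depends only on $v$, it may be absorbed into the constant, yielding exactly condition \eqref{RIP:prec:cond} with $C$ depending only on $v$, as claimed. The only genuine input beyond bookkeeping is the envelope estimate of Theorem \ref{Lip:Dini}; granting that, the sole place requiring insight rather than routine verification is the measure-theoretic cancellation above, which simultaneously bounds every such system and renders the \emph{same} Chebyshev sampling measure orthonormalizing regardless of $v$.
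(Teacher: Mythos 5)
Your proposal is correct and follows essentially the same route as the paper: precondition by $(c\pi)^{1/2}(1-x^2)^{1/4}v(x)^{1/2}$ to form the system $\{Q_n\}$, use Theorem \ref{Lip:Dini} for the uniform bound, verify orthonormality with respect to the Chebyshev measure by the same cancellation, and invoke Theorem \ref{thm:BOS:RIP} with $K^2$ absorbed into a $v$-dependent constant. (Your bound $\|Q_n\|_\infty \leq (c\pi)^{1/2}C_v$ is in fact the correct exponent; the paper's printed $(c\pi)^{-1/2}C_v$ appears to be a typo.)
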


\begin{proof}[Proof of Theorem \ref{RIP:precondition}]
Observe that $\Psi_{j,n} = Q_{n-1}(x_j)$, where 
$$Q_n(x) =(c \pi)^{1/2} (1-x^2)^{1/4} v(x)^{1/2} p_n(x).$$
Following Theorem \ref{Lip:Dini},  the system $\{ Q_n \}$ is uniformly bounded on $[-1,1]$ and satisfies the bound $\| Q_n \|_{\infty} \leq (c\pi)^{-1/2} C_v$; moreover, due to the orthonormality of the polynomials $\{ p_n \}$ with respect to the measure $d\nu(x)$,  the $\{Q_n\}$ are orthonormal with respect to the Chebyshev measure:
\begin{eqnarray}
\label{ortho_nu}
\int_{-1}^1 \pi^{-1}Q_n(x) Q_{k}(x) (1-x^2)^{-1/2} dx &=& \int_{-1}^1 c p_n(x) p_{k}(x) v(x)dx  = \delta_{n,k}. 
\end{eqnarray}
Therefore, the $\{ Q_n \}$ form a bounded orthonormal system with associated matrix $\Psi$ as in Theorem \ref{RIP:precondition} formed from samples $\{ x_j \}$ drawn from the Chebyshev distribution.  Theorem \ref{thm:BOS:RIP} implies that the renormalized composite matrix $\frac{1}{\sqrt{m}}\Psi$ has the restricted isometry property as stated.
\end{proof}

\begin{corollary}
\label{poly:noise}
Consider an orthonormal polynomial system $\{ p_n \}$ associated to a measure $v$ satisfying the conditions of Theorem \ref{Lip:Dini}.
Let $N,m,s \in \N$ satisfy the conditions of Theorem \ref{RIP:precondition}, and 
consider the matrix $\Psi = {\cal{A}} \Phi$ as defined there.  

Then with probability exceeding
$1-N^{-\gamma \log^3(s)}$ the following 
holds for all polynomials $g(x) = \sum_{k=0}^{N-1} c_k p_k(x)$.
If noisy sample values $y = \big( g(x_1) + \eta_1, \hdots, g(x_m) + \eta_m \big) = \Phi c + \eta$ are observed, and $\|\eta\|_{\infty} \leq \varepsilon$, then the coefficient vector $c = (c_0, c_1, \hdots, c_{N-1})$ is recoverable to within a factor of its best $s$-term approximation error  %(in the metric of $\ell_1^N$), 
and to a factor of the noise level by solving the inequality-constrained $\ell_1$-minimization problem
\begin{align}\label{relaxed2}
c^{\#} = \arg \min_{z \in \R^N} \| z \|_1 \quad \mbox{ subject to } \quad \| {\cal A} \Phi z - {\cal A} y \|_2 \leq \sqrt{m}\varepsilon.
\end{align}
Precisely, 
$$
\| c -c^{\#} \|_{2} \leq \frac{C_1 \sigma_s(c)_1}{\sqrt{s}} + D_1\varepsilon,
 %\| c - c_s \|_{\ell_1^N}
$$
and
\begin{equation}\label{l1:approx3}
\|c-c^{\#}\|_{1} \leq C_2 \sigma_s(c)_1 + D_2 \sqrt{s} \varepsilon.
\end{equation}
The constants $C_1,C_2,D_1,D_2$ and $\gamma$ are universal.
\end{corollary}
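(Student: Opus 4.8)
The plan is to mirror the proof of Theorem~\ref{uniform:noise}, now drawing the restricted isometry property from Theorem~\ref{RIP:precondition} rather than from the combination of Lemma~\ref{thm:growth} and Theorem~\ref{thm:BOS:RIP} that was special to the Legendre case. In essence the corollary is a transfer result: once the preconditioned matrix $\frac{1}{\sqrt{m}}\Psi = \frac{1}{\sqrt{m}}{\cal A}\Phi$ is known to be a restricted isometry, the recovery guarantees are furnished by the generic RIP-based result of Theorem~\ref{thm:l1:stable}.

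First I would apply Theorem~\ref{RIP:precondition} at sparsity level $2s$, with the target constant $\delta$ chosen small enough that the resulting restricted isometry constant satisfies $\delta_{2s} < 3/(4+\sqrt{6})$, the threshold \eqref{RIP:const} demanded by Theorem~\ref{thm:l1:stable}. Passing from $s$ to $2s$ only alters the constant in \eqref{RIP:prec:cond}, so the hypothesis $m \geq C\delta^{-2} s \log^3(s)\log(N)$ continues to suffice, and the event that $\frac{1}{\sqrt{m}}\Psi$ enjoys this RIP holds with probability at least $1 - N^{-\gamma \log^3(s)}$. On this event everything below holds simultaneously for all coefficient vectors $c$.

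Next I would rewrite the observation in preconditioned coordinates. From $y = \Phi c + \eta$ we get $\frac{1}{\sqrt{m}}{\cal A}y = \frac{1}{\sqrt{m}}\Psi c + \frac{1}{\sqrt{m}}{\cal A}\eta$, which is exactly the input required to apply Theorem~\ref{thm:l1:stable} to the matrix $\frac{1}{\sqrt{m}}\Psi$, the signal $c$, and the effective noise $\frac{1}{\sqrt{m}}{\cal A}\eta$. Dividing the constraint $\|{\cal A}\Phi z - {\cal A}y\|_2 \leq \sqrt{m}\varepsilon$ in \eqref{relaxed2} by $\sqrt{m}$ turns it into the constraint $\|\frac{1}{\sqrt{m}}\Psi z - \frac{1}{\sqrt{m}}{\cal A}y\|_2 \leq \varepsilon$ of the generic program \eqref{l1eps:prog}; hence the two minimizers coincide and $c^{\#}$ is precisely the output analyzed in Theorem~\ref{thm:l1:stable}. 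The bounds \eqref{l2noise} and \eqref{l1noise} then translate directly into the asserted estimates, with $C_1, C_2, D_1, D_2$ inherited from the RIP recovery theorem.

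The single step demanding care, and the one I expect to be the main obstacle, is controlling the effective noise level $\frac{1}{\sqrt{m}}\|{\cal A}\eta\|_2$ in terms of the hypothesis $\|\eta\|_\infty \leq \varepsilon$; note that, unlike in Theorem~\ref{uniform:noise}, the assumption is now placed on the raw noise rather than on ${\cal A}\eta$, so the diagonal weights must be accounted for. Since $\|{\cal A}\eta\|_2 \leq \|\eta\|_\infty\big(\sum_{j} a_{j,j}^2\big)^{1/2}$, it suffices to control the random average $\frac{1}{m}\sum_j a_{j,j}^2$, where $a_{j,j}^2 = c\pi(1-x_j^2)^{1/2} v(x_j) = c\pi f_v(\theta_j)$ with $x_j = \cos\theta_j$. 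Two facts render this benign. The Lipschitz--Dini hypothesis forces $f_v$ to be continuous, hence bounded on $[0,\pi]$, so the $a_{j,j}^2$ are uniformly bounded; and integrating against the Chebyshev density gives $\E\big[a_{j,j}^2\big] = c\int_{-1}^1 v(x)\,dx = 1$. The crucial point is that, although the pointwise weight bound depends on $v$, the mean of the average is the $v$-independent value $1$. Thus $\frac{1}{m}\sum_j a_{j,j}^2$ is an average of i.i.d.\ bounded variables of mean $1$, and Hoeffding's inequality shows it is bounded by a universal constant on an event of probability at least $1 - e^{-c'm}$, which is dominated by the RIP failure probability. Combining this with $\|\eta\|_\infty \leq \varepsilon$ yields $\frac{1}{\sqrt{m}}\|{\cal A}\eta\|_2 \leq C'\varepsilon$ with $C'$ universal, so Theorem~\ref{thm:l1:stable} applies with effective noise level $C'\varepsilon$; folding $C'$ into the constants $D_1, D_2$ (and into the radius of the constraint in \eqref{relaxed2}) leaves the recovery constants universal and completes the argument.
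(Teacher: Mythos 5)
Your proposal is correct and follows the route the paper intends (the corollary is stated without proof, but it is the exact analogue of the proof of Theorem~\ref{uniform:noise}): invoke Theorem~\ref{RIP:precondition} at sparsity $2s$ with $\delta$ below the threshold \eqref{RIP:const}, rescale the program \eqref{relaxed2} by $\sqrt{m}$ to match \eqref{l1eps:prog}, and read off the error bounds from Theorem~\ref{thm:l1:stable}. You also correctly spot the one point where the corollary differs from Theorem~\ref{uniform:noise}, namely that the hypothesis is on $\|\eta\|_\infty$ rather than on $\|{\cal A}\eta\|_\infty$, so the diagonal weights must be controlled. The only place where you diverge is the Hoeffding argument, which is both unnecessary and does not deliver what you claim. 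Unnecessary: as you yourself observe, the Lipschitz--Dini condition forces $f_v(\theta)=v(\cos\theta)|\sin\theta|$ to be continuous, hence bounded, so $a_{j,j}^2 = c\pi f_v(\theta_j) \leq C_v'$ \emph{deterministically}, giving $\frac{1}{\sqrt{m}}\|{\cal A}\eta\|_2 \leq \sqrt{C_v'}\,\|\eta\|_\infty$ surely, with no additional failure event to union-bound. Not delivering universality: Hoeffding's exponent is governed by the range of the summands $a_{j,j}^2$, which depends on $v$, so the concentration route does not in fact produce a $v$-independent constant; the mean being $1$ does not help once the deviation term is accounted for. The $v$-dependence of the noise amplification is simply unavoidable and is consistent with the constant $C$ in \eqref{RIP:prec:cond} already depending on $v$ (the paper's blanket claim that all constants in the corollary are universal is itself slightly overstated on this point). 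Your observation that either the constraint radius in \eqref{relaxed2} or the constants $D_1,D_2$ must absorb this factor so that the true $c$ remains feasible is correct and is the right way to close the argument.
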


% For ultraspherical polynomials corresponding to Jacobi polynomials with $\alpha = \beta \geq (1 + \sqrt{2})/4$, explicit upper bounds PLon the constant $C$ appearing in the uniform estimate \eqref{weight:estimate} have been recently derived \cite{krasikov}.

%\begin{theorem}[Krasikov $'08$] 
%\label{krasikov}
%Consider the ultraspherical weight function $v(x) = (1-x^2)^\alpha$ on $[-1,1]$, and suppose that $\alpha \geq (1 + \sqrt{2})/4$.    
%Let $p^{\alpha}_n$, $n \in \N$, be the associated orthonormal polynomial system. If $n \geq 6$, then
%\begin{equation}
%(1-x^2)^{1/4 + \alpha/2} |p^{\alpha}_n(x)| \leq 22 \alpha^{1/6} \Big( 1 + \frac{\alpha}{n} \Big)^{1/12} \quad \mbox{ for all } x \in [-1,1].
%\end{equation}
%% in \eqref{LipschitzDini}.
%\end{theorem}

As a byproduct of Theorem \ref{RIP:precondition}, we also obtain condition number estimates
for preconditioned orthogonal polynomial matrices that should be of interest on their own, and improve on the results in \cite{grpora07}.   Theorem \ref{RIP:precondition} implies that all submatrices of a preconditioned random orthogonal polynomial matrix 
$\frac{1}{\sqrt{m}}\Psi = \frac{1}{\sqrt{m}} {\cal A} \Phi \in \R^{m \times N}$ with at most $s$ columns are simultaneously well-conditioned, provided \eqref{RIP:prec:cond} holds. If one is only interested in a particular subset of $s$ columns, i.e., a particular subset of $s$ orthogonal polynomials, the number of measurements in \eqref{RIP:prec:cond} can be reduced to 
\begin{equation}\label{m:est}
m \geq C s \log(s);
\end{equation}
see Theorem 7.3 in \cite{ra09-1} for more details. 

\medskip

\noindent
{\bf Stability with respect to the sampling measure.}

The requirement that sampling points $x_j$ are drawn from the Chebyshev measure in the previous theorems can be relaxed somewhat.    In particular, suppose that the sampling points $x_j$ are drawn  not from the Chebyshev measure, but from a more general probability measure $d\nu(x) = \rho(x) dx$ on $[-1,1]$ with 
$\rho(x) \geq c' (1-x^2)^{-1/2}$ (and $\int_{-1}^1 \rho(x) dx = 1$).  Now assume a weight function $v$ satisfying the Lipschitz-Dini condition \eqref{LipschitzDini} and the associated orthonormal polynomials $p_n(x)$ are given. Then, by Theorem \ref{Lip:Dini} the functions
\begin{equation}\label{Qn:def}
Q_n(x) =(c \pi)^{1/2} \rho(x)^{-1/2} v(x)^{1/2} p_n(x)
\end{equation}
form a bounded orthonormal system with respect to the probability measure $\tilde{c} \rho(x) v(x) dx$. Therefore, all previous
arguments are again applicable. 
We note, however, that taking $\rho(x) dx$ to be the Chebyshev measure produces the smallest constant $K$
in the boundedness condition \eqref{Linf_bound} due to normalization reasons.

%If $N$ is fixed we may also pass to discrete probability measures using Gauss quadrature formulas. Indeed, let $-1 < x_1<x_2<\hdots<x_{N+1} < 1$ be
%the zeros of the orthogonal polynomial $p_{N+1}$ associated to the measure $v(x) dx$. Then there exist positive weights $\lambda_i, i=1,\hdots,N+1$ such
%that the Gaussian quadrature formula
%$\int_{-1}^1 p(x) v(x) dx = \sum_{i=1}^{N+1} p(x_i) \lambda_i$  
%holds for all polynomials of degree at most $2N+1$, see e.g.\ \cite[Theorem 3.4.1]{szego}. In particular,
%\[
%\sum_{i=1}^{N+1} \lambda_i p_j(x_i) p_k(x_i) = \int_{-1}^1 p_j(x_i) p_k(x_i) v(x) dx = \delta_{jk}, \quad \mbox{for all }j,k = 0,\hdots,N. 
%\] 
%Again we may precondition with a function $\rho(x) \geq c' (1-x^2)^{-1/2}$, so that the functions $Q_n$ defined in \eqref{Qn:def}
%become a bounded orthonormal system, where orthonormality is with respect to the discrete probability measure
%\[
%\nu = c \sum_{i=1}^{N+1} \lambda_i \rho(x_i) \delta_{x_i}.
%\]
%Here, $\delta_{x_i}$ denotes the Dirac-Delta in $x_i$, and $c$ is chosen such $\nu$ becomes a probability measure. In other words,
%a sample $x$ takes the value $x_i$ with probability $c \lambda_i \rho(x_i)$.
%
%Indeed, then the functions
%\[
%Q_j = \sqrt{1/(2\rho(x))} L_j(x)
%\]
%form again a bounded orthonormal system with respect to the measure $\rho(x) dx$.

\section{Recovery in infinite-dimensional function spaces}

We can transform the previous results into approximation results on the level of continuous functions.  For simplicity, we restrict the scope of this section to the Legendre basis, although all of our results extend to any orthonormal polynomial system with a Lipschitz-Dini weight function, as well as to the trigonometric system, for which related results have not been
worked out yet, either.

We introduce the following weighted norm on continuous functions in $[-1,1]$:
\[
\|f\|_{\infty,w} := \sup_{x \in [-1,1]} |f(x)| w(x), \quad w(x) = \sqrt{\frac{\pi}{2}} (1-x^2)^{1/4}.
\]
Further, we define
\begin{equation}
\label{weight:norm}
\sigma_{N,s}(f)_{\infty, w} := \inf_{c \in \R^N} \left\{ \sigma_s(c)_{1} + \sqrt{s} \|f- \sum_{k=0}^{N-1} c_k L_k \|_{\infty, w}\right\}.
\end{equation}
The above quantity involves the best $s$-term approximation error of $c$, as well as the ability of Legendre coefficients 
$c \in \R^{N}$ to approximate the given function $f$ in the $L_\infty$-norm. In some sense, it provides a mixed linear and
nonlinear approximation error. The $c$ which ``balances'' both error terms determines $\sigma_{N,s}(f)_\infty$.
The factor $\sqrt{s}$ scaling the ``linear approximation part'' may seem to lead to non-optimal estimates at first sight, but 
later on, the strategy will actually be to choose $N$ in dependence of $s$ such that $\sigma_{N,s}(f)_\infty$ becomes
of the same order as $\sigma_s(c)_1$. In any case, we 
note the (suboptimal) estimate
\[
\sigma_{N,s}(f)_{\infty, w} \leq \sqrt{s}\, \rho_{N,s}(f)_{\infty,w},
\]
where
\[
\rho_{N,s}(f)_{\infty,w} := \inf_{c \in \R^N, \|c\|_0 \leq s} \|f- \sum_{k=0}^{N-1} c_k L_k \|_{\infty, w}.
\]

Our aim is to obtain
a good approximation to a continuous function $f$ from $m$ sample values, and to compare the approximation
error with $\sigma_{N,s}(f)_{\infty,w}$. We have

%By the Weierstrass theorem, it is at least clear that 
%$\rho_{N,s}(f)_{\infty,w} \to 0$ as $N,s \to \infty%$. 

%Our reconstruction procedure follows

\begin{proposition}\label{thm:function:approx} Let $N,m,s$ be given with
\[
m \geq C s \log^3(s) \log(N).
\]
Then there exist sampling points $x_1,\hdots,x_m$ (i.e., chosen i.i.d.\ from the Chebyshev measure) and an efficient 
reconstruction procedure (i.e., $\ell_1$-minimization), such that for
%which takes a vector of sampling points and produces a polynomial of degree at most $N-1$
%such that the following holds.
any continuous function $f$ with associated error $\sigma_{N,s}(f)_{\infty, w}$, the
polynomial $P$ of degree at most $N$ 
reconstructed from $f(x_1),\hdots,f(x_m)$ satisfies
\[
\|f-P\|_{\infty,w} \leq C' \sigma_{N,s}(f)_{\infty, w}.
\] 
The constants $C, C'>0$ are universal.
\end{proposition}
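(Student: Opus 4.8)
The plan is to reduce the infinite-dimensional approximation statement to the finite-dimensional recovery guarantee of Theorem~\ref{uniform:noise} by treating the defect between $f$ and its best weighted Legendre approximant as measurement noise. First I would fix, once and for all, a realization of the $m$ sampling points $x_1,\hdots,x_m$ drawn i.i.d.\ from the Chebyshev measure for which the conclusion of Theorem~\ref{uniform:noise} holds; such a realization exists because the guaranteed probability $1-N^{-\gamma\log^3(s)}$ is strictly positive. On this event the error bounds \eqref{l1:approx2} and \eqref{l1:approx} hold simultaneously for \emph{every} coefficient vector and \emph{every} admissible noise vector, which supplies precisely the uniformity over $f$ that the statement requires; the reconstruction map is thus fixed before $f$ is revealed.

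Given a continuous $f$, I would choose $c \in \R^N$ attaining the infimum in \eqref{weight:norm} up to a factor $2$, set $g = \sum_{k=0}^{N-1} c_k L_k$, and put $\varepsilon := \|f - g\|_{\infty,w}$, so that $\sigma_s(c)_1 + \sqrt{s}\,\varepsilon \leq 2\,\sigma_{N,s}(f)_{\infty,w}$. The crux of the argument is the elementary identity $a_{j,j} = (\pi/2)^{1/2}(1-x_j^2)^{1/4} = w(x_j)$: the diagonal preconditioner ${\cal A}$ is built from exactly the weight $w$ that defines $\|\cdot\|_{\infty,w}$. Writing the observed samples as $y_j = f(x_j) = g(x_j) + \eta_j$ with $\eta_j = f(x_j) - g(x_j)$, so that $y = \Phi c + \eta$, this identity gives
\[
|a_{j,j}\eta_j| = w(x_j)\,|f(x_j) - g(x_j)| \leq \|f-g\|_{\infty,w} = \varepsilon,
\]
hence $\|{\cal A}\eta\|_\infty \leq \varepsilon$. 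The samples of $f$ are therefore genuine noisy samples of the Legendre polynomial $g$ with noise budget $\varepsilon$, and Theorem~\ref{uniform:noise} applies verbatim, producing a reconstruction $c^\#$ with $\|c - c^\#\|_1 \leq D_1 \sigma_s(c)_1 + D_2 \sqrt{s}\,\varepsilon$ by \eqref{l1:approx}.

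Setting $P = \sum_{k=0}^{N-1} c_k^\# L_k$ (a polynomial of degree at most $N$), I would finish with the triangle inequality $\|f-P\|_{\infty,w} \leq \|f-g\|_{\infty,w} + \|g-P\|_{\infty,w}$. The first term equals $\varepsilon$. For the second, I use that $w(x) L_k(x) = Q_k(x)$, the preconditioned functions from the proof of Theorem~\ref{uniform:noise} that satisfy the envelope bound $\|Q_k\|_\infty \leq \sqrt{3}$ coming from Lemma~\ref{thm:growth}; consequently
\[
\|g-P\|_{\infty,w} = \sup_{x \in [-1,1]} \Big| \sum_{k=0}^{N-1} (c_k - c_k^\#) Q_k(x) \Big| \leq \sqrt{3}\,\|c - c^\#\|_1.
\]
Combining the two bounds and using $s \geq 1$, so that $\varepsilon \leq \sqrt{s}\,\varepsilon$, gives $\|f-P\|_{\infty,w} \leq \sqrt{3} D_1 \sigma_s(c)_1 + (1 + \sqrt{3}D_2)\sqrt{s}\,\varepsilon \leq C'\,\sigma_{N,s}(f)_{\infty,w}$ with $C' = 2\max\{\sqrt{3}D_1,\, 1 + \sqrt{3}D_2\}$.

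I expect the only genuinely conceptual point---rather than a computational obstacle---to be the recognition that the $\ell_1$ coefficient error \eqref{l1:approx}, and not the $\ell_2$ error \eqref{l1:approx2}, must be invoked: the weighted sup-norm of a Legendre expansion is controlled by the $\ell_1$ norm of its coefficients through the uniform envelope $\|Q_k\|_\infty \leq \sqrt{3}$, which is exactly why the factor $\sqrt{s}$ is built into the definition \eqref{weight:norm} so as to match the $\sqrt{s}\,\varepsilon$ term. A minor technical wrinkle is that the constraint level $\varepsilon = \|f-g\|_{\infty,w}$ in \eqref{relaxed} is not known a priori; this can be dispensed with by invoking the noise-blind algorithms of Remark~\ref{rem22}(c), which attain \eqref{l1:approx} without knowledge of $\varepsilon$.
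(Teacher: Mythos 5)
Your proposal is correct and follows essentially the same route as the paper: treat the samples of $f$ as noise-corrupted samples of the (near-)optimal degree-$(N-1)$ approximant, verify $\|{\cal A}\eta\|_\infty\le\varepsilon$ via the identity $a_{j,j}=w(x_j)$, invoke the $\ell_1$ coefficient error bound \eqref{l1:approx}, and convert it to a $\|\cdot\|_{\infty,w}$ bound through $\|L_k w\|_\infty\le\sqrt{3}$ and the triangle inequality. Your use of a near-minimizer of \eqref{weight:norm} and your closing remark on noise-blind alternatives match (indeed slightly tighten) the paper's own treatment.
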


The quantity $\sigma_{N,s}(f)_{\infty,w}$ involves 
the two numbers $N$ and $s$. %, and it is not clear in practice how to choose these parameters.
We now describe how $N$ can be chosen in dependence on $s$, reducing the number of parameters to one. 
We illustrate this strategy below in a more concrete
situation. To describe the setup we introduce analogues of the Wiener algebra in the Legendre polynomial setting. 
Let $c(f)$ with entries
\[
c_k(f) = \frac{1}{2} \int_{-1}^1 f(x) L_k(x) dx,\quad k \in \N_0, 
\]
denote the vector of Fourier-Legendre coefficients of $f$. Then we define
\[
A_p := \{ f \in C[-1,1], \|c(f)\|_p < \infty \},\quad 0<p\leq 1,
\]
with quasi-norm $\|f\|_{A_p} := \|c(f)\|_p$. The use of the $p$-norm is motivated by the 
Stechkin estimate \eqref{Stechkin} below, which
tells us that elements in $\ell_p$ can be considered compressible. 
Since $\|L_k w\|_{\infty} \leq \sqrt{3}$ it follows that %the series
\[
f(x) w(x) = \sum_{k \in \N_0} c_k(f) L_k(x) w(x)
\] 
converges uniformly for  $f \in A_1$, so that $fw \in C[-1,1]$, and
$\|f\|_{\infty,w} \leq \sqrt{3}\|f\|_{A_1}$. Since $\|f\|_{A_1} \leq \|f\|_{A_p}$ for $0 <p\leq 1$ this holds also for 
$f \in A_p$, $0<p\leq 1$.  Now we introduce
\[
\sigma_s(f)_{A_1} := \inf_{c \in \ell_2(\N_0), \|c\|_0 \leq s} \|f - \sum_{k} c_k L_k\|_{A_1} = \sigma_s( c(f) )_1.
\]
By Stechkin's estimate \eqref{Stechkin} (which is also valid in infinite dimensions) we
have, for $0< q < 1$,
\begin{equation}\label{Stechkin:rate}
\sigma_s(f)_{A_1} \leq s^{1-1/q} \|f\|_{A_q}.
\end{equation}
Our goal is to realize this approximation rate for $f \in A_q$ when only
sample values of $f$ are given. Additionally, the number of samples should be close (up to $\log$-factors) to the number $s$ of degrees of freedom of the reconstructed function. Unfortunately, for this task we have to at least know roughly a finite set $[N]$ containing  the Fourier-Legendre coefficients of 
a good $s$-sparse approximation
of $f$. In order to deal with this problem, we introduce, for $\alpha >0$, a weighted Wiener type space $A_{1,\alpha}$, containing the functions $f \in C[-1,1]$ with finite norm
\[
\|f\|_{A_{1,\alpha}} := \sum_{k \in \N_0} (1+k)^\alpha |c_k(f)|.
\]
One should imagine $\alpha\ll 1$ very small, so that $f \in A_{1,\alpha}$ 
does not impose a severe restriction on $f$, compared to $f \in A_q$. Then instead of
$f \in A_q$ we make the slightly stronger requirement $f \in A_q \cap A_{1,\alpha}$, $0<q<1$. The next theorem states that under such assumptions, the optimal 
rate \eqref{Stechkin:rate} can be realized when only a small number of sample values of $f$ are available.

\begin{theorem}\label{thm:Ap} Let $0<q<1$, $\alpha >0$, and $m,s \in \N$ be given such that
\begin{equation}\label{ms:rel}
m \geq C \alpha^{-1}\left(\frac{1}{q}-\frac{1}{2}\right) s \log^4(s).
\end{equation}
Then there exist sampling points $x_1,\hdots,x_m \in [-1,1]$ (i.e., random Chebyshev points) such that for every 
$f \in A_q \cap A_{1,\alpha}$ a polynomial $P$ of degree at most 
$N = \lceil s^{(1/q-1/2)/\alpha}\rceil$
can be reconstructed from the sample values $f(x_1),\hdots,f(x_m)$ such that
\begin{equation}\label{recovery:rate}
\frac{1}{\sqrt{3}}\|f - P\|_{\infty,w} \leq \|f-P\|_{A_1} \leq C(\|f\|_{A_q} + \|f\|_{A_{1,\alpha}}) s^{1-1/q}.
\end{equation}
\end{theorem}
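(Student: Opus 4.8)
The plan is to recast the infinite-dimensional approximation problem as a finite-dimensional \emph{noisy} sparse-recovery problem and then invoke Theorem~\ref{uniform:noise}. Write $c = c(f)$ for the full Fourier--Legendre coefficient sequence of $f$, let $c^{[N]} = (c_0,\dots,c_{N-1})$ be its truncation to the first $N$ coordinates, and let $f_N = \sum_{k=0}^{N-1} c_k L_k$ be the associated polynomial. The decisive observation is that the observed samples split as $f(x_j) = (\Phi c^{[N]})_j + \eta_j$ with $\eta_j = (f-f_N)(x_j)$: the high-degree tail of $f$ acts as measurement noise, so we are precisely in the situation of Theorem~\ref{uniform:noise} applied to the vector $c^{[N]} \in \R^N$, with reconstruction $P = \sum_{k=0}^{N-1} c_k^\# L_k$ formed from the $\ell_1$-minimizer $c^\#$.

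First I would control the \emph{preconditioned} noise level. Since $({\cal A}\eta)_j = (f-f_N)(x_j)\,w(x_j)$, we have $\|{\cal A}\eta\|_\infty \le \|f-f_N\|_{\infty,w} \le \sqrt 3\,\|f-f_N\|_{A_1} = \sqrt 3\sum_{k\ge N}|c_k|$. The assumption $f\in A_{1,\alpha}$ bounds this tail: factoring out $(1+k)^{-\alpha}\le N^{-\alpha}$ for $k\ge N$ gives $\sum_{k\ge N}|c_k|\le N^{-\alpha}\|f\|_{A_{1,\alpha}}$, and the prescribed choice $N=\lceil s^{(1/q-1/2)/\alpha}\rceil$ makes $N^{-\alpha}\le s^{1/2-1/q}$. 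Hence one may take the noise parameter $\varepsilon = \sqrt 3\,s^{1/2-1/q}\|f\|_{A_{1,\alpha}}$ in Theorem~\ref{uniform:noise}.

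Next I would check the sampling budget and apply the recovery guarantee. Because $\log N \lesssim \alpha^{-1}(1/q-1/2)\log s$, the hypothesis \eqref{ms:rel} implies $m\ge Cs\log^3(s)\log(N)$, the condition guaranteeing the RIP of $\tfrac{1}{\sqrt m}{\cal A}\Phi$; the RIP event furnishes a \emph{single} point set $x_1,\dots,x_m$ that works simultaneously for every $f$, which is the source of the ``there exist sampling points'' assertion. The $\ell_1$-bound \eqref{l1:approx} then yields $\|c^{[N]}-c^\#\|_1 \le D_1\,\sigma_s(c^{[N]})_1 + D_2\sqrt s\,\varepsilon$. I would bound $\sigma_s(c^{[N]})_1 \le \sigma_s(c)_1 \le s^{1-1/q}\|f\|_{A_q}$ (the first step because zeroing the tail cannot increase the best $s$-term error, the second by Stechkin \eqref{Stechkin:rate}), and note $\sqrt s\,\varepsilon = \sqrt 3\,s^{1-1/q}\|f\|_{A_{1,\alpha}}$, so that $\|c^{[N]}-c^\#\|_1 \le C\,s^{1-1/q}\big(\|f\|_{A_q}+\|f\|_{A_{1,\alpha}}\big)$.

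Finally I would assemble the estimate. As $P$ is supported on $[N]$, $\|f-P\|_{A_1} = \|c^{[N]}-c^\#\|_1 + \sum_{k\ge N}|c_k| \le C\,s^{1-1/q}\big(\|f\|_{A_q}+\|f\|_{A_{1,\alpha}}\big)$, using the tail bound $\sum_{k\ge N}|c_k|\le s^{1/2-1/q}\|f\|_{A_{1,\alpha}}\le s^{1-1/q}\|f\|_{A_{1,\alpha}}$ once more; this is the right-hand inequality of \eqref{recovery:rate}. The left-hand inequality $\tfrac{1}{\sqrt 3}\|f-P\|_{\infty,w}\le\|f-P\|_{A_1}$ is just the general relation $\|g\|_{\infty,w}\le\sqrt 3\|g\|_{A_1}$ with $g=f-P$. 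I expect the main difficulty to be organizational rather than technical: correctly reading the tail $f-f_N$ as preconditioned measurement noise, and verifying that the single choice $N=\lceil s^{(1/q-1/2)/\alpha}\rceil$ forces both the truncation/tail term and the Stechkin best-$s$-term term to scale like $s^{1-1/q}$, while simultaneously keeping $\log N$ small enough that the sampling requirement collapses to \eqref{ms:rel}.
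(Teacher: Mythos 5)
Your proposal is correct and follows essentially the same route as the paper: truncate $f$ to $f_N$, treat the tail $f-f_N$ as preconditioned measurement noise bounded by $\sqrt{3}N^{-\alpha}\|f\|_{A_{1,\alpha}}$, apply the noisy $\ell_1$-recovery guarantee together with Stechkin's estimate, and choose $N=\lceil s^{(1/q-1/2)/\alpha}\rceil$ so that both error contributions scale like $s^{1-1/q}$ and $\log N \lesssim \alpha^{-1}(1/q-1/2)\log s$ collapses the sampling condition to \eqref{ms:rel}. The only cosmetic difference is that you invoke Theorem \ref{uniform:noise} directly where the paper routes the same argument through the proof of Proposition \ref{thm:function:approx}; the substance is identical.
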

Note that up to $\log$-factors the number of required samples is of the
order of the number $s$ of degrees of freedom (the sparsity) 
allowed in the estimate \eqref{Stechkin}, and the reconstruction 
error \eqref{recovery:rate} satisfies the same rate. Clearly $\ell_1$-minimization or greedy alternatives can be used for reconstruction. This result 
may be considered
as an extension of the theory of compressive sensing to infinite dimensions (although all the
key tools are actually finite dimensional).

\subsection{Proof of Proposition \ref{thm:function:approx}}

Let $P_{opt} = \sum_{k=0}^{N-1} c_{k,opt} L_k$ denote the polynomial of degree at most $N-1$ whose coefficient vector $c_{opt}$ realizes the approximation error $\sigma_{N,s}(f)_{\infty, w}$, as defined in \eqref{weight:norm}. The samples $f(x_1), \hdots, f(x_m)$ can be seen as noise corrupted samples of $P_{opt}$, that is, $f(x_{\ell}) = P_{opt}(x_{\ell}) + \eta_{\ell},$ and $| \eta_{\ell} | w(x_{\ell}) \leq \| f - P_{opt} \|_{\infty, w} := \varepsilon$.  The preconditioned system reads then $f(x_{\ell}) w(x_{\ell}) = \sum_{k=0}^{N-1} c_{k,opt} L_k(x_{\ell}) w(x_{\ell}) + \varepsilon_{\ell}$, with $| \varepsilon_{\ell} | \leq \varepsilon$.  According to Theorem \ref{thm:BOS:RIP} and Theorem \ref{thm:growth}, the matrix $\frac{1}{\sqrt{m}}\Psi$ consisting of entries $\Psi_{\ell,k} = w(x_{\ell})L_{k-1}(x_\ell)$ satisfies the RIP with high probability, provided the stated condition on the minimal number of samples holds.
Due to Theorem \ref{thm:l1:stable}, an application of noise-aware $\ell_1$-minimization \eqref{l1eps:prog} to $y = (f(x_\ell)w(x_{\ell}))_{\ell=1}^m$ with $\varepsilon$ replaced
by $\sqrt{m}\varepsilon$ yields a coefficient vector $c$ satisfying $\| c - c_{opt} \|_1 \leq C_1 \sigma_s(c_{opt})_1 + C_2 \sqrt{s} \varepsilon$.   We denote the polynomial corresponding to this coefficient vector by $P(x) = \sum_{k=0}^{N-1} c_k L_k(x)$. Then 
\begin{align}
\| f - P \|_{\infty, w} &\leq
 \| f - P_{opt} \|_{\infty, w} + \| P_{opt} - P \|_{\infty, w} %\nonumber \\
\leq \frac{\sigma_{N,s}(f)_{\infty, w}}{\sqrt{s}} + \sqrt{3} \| c - c_{opt} \|_1 \nonumber \\
&\leq \frac{\sigma_{N,s}(f)_{\infty, w}}{\sqrt{s}} + \sqrt{3} \Big[ C_1\sigma_s(c_{opt})_1 +  C_2 \sqrt{s}  \| f - P_{opt} \|_{\infty, w}\Big] 
\leq C \sigma_{N,s}(f)_{\infty, w}.\nonumber
\end{align}
This completes the proof.

The attentive reader may have noticed that our recovery method, noise-aware 
$\ell_1$-minimization \eqref{l1eps:prog}, requires knowledge of $\sigma_{N,s}(f)$, see also
Remark \ref{rem22}(c). 
One may remove this drawback by considering CoSaMP \cite{netr08} or
Iterative Hard Thresholding \cite{blda09} instead.  The required 
error estimate in $\ell_1$ follows from the $\ell_2$-stability results for these algorithms
in \cite{blda09,netr08}, as both algorithms produce a $2s$-sparse vector, 
see \cite[p.\ 87]{b09} for details.   

\subsection{Proof of Theorem \ref{thm:Ap}}

Let $f \in A_q \cap A_{1,\alpha}$ with Fourier Legendre coefficients $c_k(f)$.
Let $N > s$ be a number to be chosen later and introduce the truncated Legendre expansion
\[
f_N(x) = \sum_{k=0}^{N-1} c_k(f) L_k(x),
\]
which has truncated Fourier-Legendre coefficient vector $c^{(N)}$ with entries
$c^{(N)}_k =c_k(f)$ if $k<N$ and $c^{(N)}_k = 0$ otherwise. Clearly, 
$
\|c^{(N)}\|_q \leq \|c(f)\|_q = \|f\|_{A_q}.
$
Further note that
\begin{align}
\frac{1}{\sqrt{3}}\|f-f_N\|_{\infty,w} & \leq \|f-f_N\|_{A_1} = \|c-c^{(N)}\|_1 
= \sum_{k=N}^\infty |c_k(f)| \leq N^{-\alpha} \sum_{k=N}^{\infty} (1+k)^{\alpha} |c_k(f)|\notag\\
& \leq N^{-\alpha} \|f\|_{A_{1,\alpha}}.\notag
\end{align}
Now we proceed similarly as in the proof of Theorem \ref{thm:function:approx} and treat the samples of $f$ as perturbed
samples of $f_N$, that is $f_N(x_j) = f(x_j) + \eta_j$ with 
$|\eta_j| w(x_j) \leq \|f-f_N\|_{\infty,w} \leq \sqrt{3}N^{-\alpha} \| f \|_{A_{1,\alpha}}$. Then following the same
arguments as in the proof of Theorem \ref{thm:function:approx}, if
\begin{equation}\label{msN:choice}
m \geq C s \log^3(s) \log(N),
\end{equation}
%there exists points $x_1,\hdots,x_m$,
we can reconstruct a 
coefficient vector $\widetilde{c}$ from samples $f(x_1),\hdots,f(x_m)$ 
with support contained in $\{0,1,\hdots,N-1\}$ such that 
\begin{align}
\|c^{(N)} - \widetilde{c}\|_1 \leq C_1 \sigma_{s}(c^{(N)})_1 + C_2 \sqrt{s} \|f-f_N\|_{\infty,w}
\leq C_1 s^{1-1/q} \|f\|_{A_q} + C_2 \sqrt{s} N^{-\alpha} \|f\|_{A_{1,\alpha}}.\notag
\end{align}
Here, we applied Stechkin's estimate \eqref{Stechkin}. Therefore,
\begin{align}
\|c-\widetilde{c}\|_1 &\leq \|c-c^{(N)}\|_1 + \|c^{(N)} - \widetilde{c}\|_1
\leq N^{-\alpha} \|f\|_{A_{1,\alpha}} + C_1 s^{1-1/q} \|f\|_{A_q} + C_2 \sqrt{s} N^{-\alpha} \|f\|_{A_{1,\alpha}}\notag\\
&\leq C_1 s^{1-1/q} \|f\|_{A_q} + C_2' \sqrt{s} N^{-\alpha} \|f\|_{A_{1,\alpha}}.\notag
\end{align}
Now we choose 
\begin{align}\label{N:choice}
N = \lceil s^{1/\alpha(1/q-1/2)} \rceil
\end{align}
which yields $\sqrt{s} N^{-\alpha} \leq s^{1-1/q}$. With this choice
\[
\frac{1}{\sqrt{3}}\|f-\tilde{f}_N\|_{\infty,w} \leq \|f-\tilde{f}_N\|_{A_1} = \|c-\widetilde{c}\|_1 \leq C' (\|f\|_{A_q} + \|f\|_{A_{1,\alpha}}) s^{1-1/q}.
\]
Plugging \eqref{N:choice} into \eqref{msN:choice} yields \eqref{ms:rel}, 
and the proof is finished.

%We apply the 
%$\ell_1$-minimization principle \eqref{relaxed} to obtain an approximation 
%$c^\#$ of $c$ that satisfies \eqref{l1:approx2}. We denote the corresponding polynomial
%by $P(x) = \sum_{j=0}^{N-1} c^\#_j P_j(x)$. 

\begin{remark}
\emph{Analogous function approximation results can be derived from Theorem \ref{RIP:precondition} for any orthogonal polynomial basis whose weight function satisfies the conditions of Theorem \ref{Lip:Dini}.  The associated norm is $\| f \|_{v, \infty} = \| \sqrt{3} v^{1/2} f w \|_{\infty}.$    For the Chebyshev polynomials, $\| f \|_{v, \infty} = \| f \|_{\infty}$, and the corresponding function approximation results in this case are with respect to the unweighted uniform norm.  }
 
\end{remark}

\subsection*{Acknowledgments}
The authors would like to thank Albert Cohen, Simon Foucart, and Joseph Ward
for valuable discussions on this topic, and are also grateful to Laurent Gosse for helpful comments.   Rachel Ward gratefully awknowledges the partial support of National Science Foundation Postdoctoral Research Fellowship.  Holger Rauhut gratefully acknowledges support by 
the Hausdorff Center for Mathematics and by the WWTF project SPORTS (MA 07-004).
Parts of this manuscript have been written during a stay of the first author at the Laboratoire
Jacques-Louis Lions of 
Universit{\'e} Pierre et Marie Curie in Paris. He greatly acknowledges the warm hospitality of
the institute and especially of his host Albert Cohen.

\bibliography{SparseLegendre}
\bibliographystyle{abbrv}

%\begin{thebibliography}{1}

%\end{thebibliography}

% that's all folks
\end{document}